\newcommand\cyr
\renewcommand\rmdefault{wncyr}
\renewcommand\sfdefault{wncyss}
\renewcommand\encodingdefault{OT2}
\DeclareTextFontCommand{\textcyr}{\cyr}
\newtheorem{theorem}{Theorem}[section]
\newtheorem{lemma}[theorem]{Lemma}
\theoremstyle{definition}
\newtheorem{hypothesis}[theorem]{Hypothesis}
\theoremstyle{remark}
\newtheorem{remark}[theorem]{Remark}
\numberwithin{equation}{section}
\newcommand{\bbR}{\mathbb R}
\newcommand{\bbC}{\mathbb C}
\renewcommand{\epsilon}{\varepsilon}
\newcommand{\be}{\begin{equation}}
\newcommand{\ee}{\end{equation}}
\newcommand{\Span}{\mathrm{span}}
\newcommand{\R}{\mathbb{R}}
\newcommand{\cB}{{\mathcal B}}
\newcommand{\cH}{{\mathcal H}}
\newcommand{\cP}{{\mathcal P}}
\newcommand{\cU}{{\mathcal U}}
\renewcommand{\Im}{{\ensuremath{\mathrm{Im}}}}
\newcommand{\tr}{\mathrm{tr}}
\newcommand{\fA}{\mathfrak{A}}
\newcommand{\fB}{\mathfrak{B}}
\newcommand{\fC}{\mathfrak{C}}
\DeclareMathOperator{\Ran}{\mathrm{Ran}}
\DeclareMathOperator{\Ker}{\mathrm{Ker}}
\newcommand{\linspan}{\mathrm{lin\ span}}
\newcommand{\Dom}{\mathrm{Dom}}
\newcommand{\dom}{\mathrm{Dom}}
\DeclareFontFamily{U}{rcjhbltx}{}
\DeclareFontShape{U}{rcjhbltx}{m}{n}{<->rcjhbltx}{}
\DeclareSymbolFont{hebrewletters}{U}{rcjhbltx}{m}{n}
\let\aleph\relax\let\beth\relax
\let\gimel\relax\let\daleth\relax
\DeclareMathSymbol{\aleph}{\mathord}{hebrewletters}{39}
\DeclareMathSymbol{\beth}{\mathord}{hebrewletters}{98}
\DeclareMathSymbol{\gimel}{\mathord}{hebrewletters}{103}
\DeclareMathSymbol{\daleth}{\mathord}{hebrewletters}{100}
\DeclareMathSymbol{\lamed}{\mathord}{hebrewletters}{108}
\DeclareMathSymbol{\mem}{\mathord}{hebrewletters}{109}
\DeclareMathSymbol{\ayin}{\mathord}{hebrewletters}{96}
\DeclareMathSymbol{\tsadi}{\mathord}{hebrewletters}{118}
\DeclareMathSymbol{\qof}{\mathord}{hebrewletters}{113}
\DeclareMathSymbol{\shin}{\mathord}{hebrewletters}{152}
\begin{document}

\title [On invariance principle ]
{ On the invariance  principle for a characteristic function  }

\author{K. A. Makarov}
\address{Department of Mathematics, University of Missouri, Columbia, Missouri 63211, USA}
\email{makarovk@missouri.edu}

\author{E. Tsekanovskii }
\address{
 Department of Mathematics, Niagara University, Lewiston,
NY  14109, USA } \email{tsekanov@niagara.edu}

\subjclass[2020]{Primary: 81Q10, Secondary: 35P20, 47N50}

\keywords{ Dissipative operators, characteristic function, deficiency indices,
  quasi-self-adjoint extensions, Krein-von Neumann  extension.\\
  The first author was partially  supported by the Simons collaboration grant 00061759 while preparing this article}

\dedicatory{In respectful memory of Sergei Nikolaevich Naboko}

\begin{abstract} We extend the invariance principle for a characteristic function of a  dissipative operator with respect to the group of affine transformations of the real axis preserving the orientation to the case of general $SL_2(\bbR)$ transformations.
\end{abstract}

\maketitle

\section{Introduction} In \cite{MT-S} we introduced the concept of a characteristic function associated with a triple of operators consisting  of  (i)  a densely defined symmetric operator with deficiency indices $(1,1)$, (ii)  its quasi-self-adjoint dissipative extension    and (iii) a (reference)  self-adjoint extension. This concept turns  out to be convenient in two respects. On the one hand,
 the issue  of choosing an undetermined constant phase factor in the definition of the characteristic function of an unbounded dissipative operator (a  quasi-self-adjoint extension),
which is due to    M.~S.~Liv\v{s}ic \cite{L46} (also see \cite{AkG}), is cleared. On the other hand,  the characteristic function  of a triple determines the whole  triple up to  mutual unitary equivalence, provided that the underlying symmetric operator is prime (see the corresponding uniqueness theorem in \cite{MT-S}). 
 
 From  technical point of view, the characteristic function of a triple has shown  itself as an adequate  tool for solving certain problems in operator theory.  For instance, the solution  of the 
 J$\clock$rgensen-Muhly problem \cite{J80} presented in \cite{MTH,MTBook} (in the particular case of the deficiency indices $(1,1)$)  led to  the complete classification   of simplest  solutions  of the classical commutation relations in the form  
 \begin{equation}\label{1/2Wdot}
U_t \widehat AU_t^*=\widehat  A+t I \quad \text{on } \quad \Dom(\widehat  A), \quad t\in \bbR,
\end{equation}
where  $\widehat A$ is a   dissipative quasi-self-adjoint extension of a symmetric operator $\dot A$ with deficiency indices $(1,1)$  
and  $U_t$ is  a strongly continuous one-parameter group of unitary operators. 
 Recall that the  J$\clock$rgensen-Muhly  problem is to provide 
an intrinsic characterization of  symmetric operators $\dot A$ satisfying the commutation relation \eqref{1/2Wdot}.  In this context  it is worth mentioning  that the solution of the  problem was based on the study of the transformation properties of the characteristic function of a triple of operators (generated by   the  symmetric  and dissipative solutions of \eqref{1/2Wdot}  and augmented by a reference self-adjoint extension of $\dot A$) with respect to affine transformations of the operators of the triple \cite{MTH,MTBook}.

 As opposed to the familiar transformation law  for the characteristic function   $S_{\widehat A}(z)$  of   a bounded dissipative operator $\widehat A$ introduced by M.~S.~Liv\v{s}ic in \cite{Lv1} (also see \cite{LOOW})
 \begin{equation}\label{law}S_{f(\widehat A)}\circ f=S_{\widehat A}\end{equation}
 valid with respect to affine transformations $f(z)=az+b$, $a>0$, $ b\in \bbR$,  
the extension of the  law \eqref{law} to the case of  triples of  unbounded operators  requires appropriate  modifications related to alignment of  relevant phase factors   (see \cite[Theorem F.1, Appendix F]{MTH,MTBook}).

 The goal of this Note is to extend  the transformation law   to the case of general automorphisms  $f\in  Aut (\bbC_+)$ of the upper half-plane $\bbC_+$. The corresponding  
 main result  of this Note  is  as follows, see Theorem \ref{main}.
    
    Given    $f\in Aut (\bbC_+)$ and a  triple $ \fA=(\dot A, \widehat A, A)$,  
  with   $\dot A$  a prime symmetric  operator, $\widehat A$ a maximal dissipative extension of $\dot A$ and $A$ its  (reference) self-adjoint extension, 
we show that if the preimage $\omega=f^{-1}(\infty)$ belongs to the spectrum of  $\widehat A$, then  the  triple
   $$f(\fA)=(f(\dot A),f( \widehat A), f(A))   $$ is well defined. Moreover,   for an appropriately normalized $\widehat S_\fA(z)$  characteristic function of the 
    triple $\fA$ 
 the invariance relation 
   $$
   \widehat S_{f(\fA)}\circ f=\widehat S_{\fA}
   $$
   holds (see \eqref{norm} for the definition of  the normalized  characteristic function  $\widehat S_{\fA}(z)$). 
   
If, instead,  $\omega=f^{-1}(\infty)$ is a regular point of the dissipative operator  $\widehat A$,   we relate the 
  characteristic function $S_{f(\widehat A)}(z)$  of the bounded dissipative operator $f(\widehat A) $ to  the characteristic function $ S_{\fA}(z)$  of the triple as 
  $$
  S_{f(\widehat A)}\circ f=\overline{\Theta_f}S_{\fA}(z),
$$
with  $\Theta_f=S_{\fA}(\omega+i0)$   a  unimodular constant factor.  
 
As an application of the extended invariance principle we obtain
 identities relating the  Friedrichs and Krein-von Neumann extensions of model homogeneous non-negative symmetric  operators 
 and their inverses, see Theorem \ref{last}.

\section{Preliminaries and basic definitions}\label{s2}
Let  $\dot A$ be  a densely defined symmetric operator with deficiency indices $(1,1)$ and   $ A$ its self-adjoint (reference) extension.

Following \cite{D,GT,L46,MT-S}  recall the concept of the Weyl-Titchmarsh
 and   Liv\v{s}ic functions associated with the pair $(\dot A, A)$.

Suppose that (normalized) deficiency elements $g_\pm$,
\begin{equation}\label{start}
g_\pm\in \Ker( \dot A^*\mp iI),\quad 
 \|g_\pm\|=1,
 \end{equation}
  are chosen in such a way that
 \begin{equation}\label{rss}
g_+-g_-\in \dom (A).
\end{equation}

Consider   the {\it Weyl-Titchmarsh function }\footnote{Paying tribute to historical justice it is worth mentioning that the function $M(z)$ has been introduced by Donoghue in \cite{D}.
However, as one  can see from \cite[eq. (5.42)]{GKMT}, it is elementary to express $M(z)$ in terms of the classical Weyl-Titchmarsh function which explains the terminology we use.}
\begin{equation}\label{WTF}
M(z)=
((Az+I)(A-zI)^{-1}g_+,g_+), \quad z\in \bbC_+,
\end{equation} 
associated with the pair $(\dot A, A)$
and   also   the  {\it  Liv\v{s}ic function}
\begin{equation}\label{charsum}
s(z)=\frac{z-i}{z+i}\cdot \frac{(g_z, g_-)}{(g_z, g_+)}, \quad
z\in \bbC_+,
\end{equation}
\begin{equation}\label{start2}
0\ne g_z\in \Ker( \dot A^*- zI), \quad z\in \bbC_+.
\end{equation}

 Recall the important relationship that links  the Weyl-Titchmarsh and  Liv\v{s}ic functions
  \cite {MT-S},
 \begin{equation}\label{blog}
s(z)=\frac{M(z)-i}{M(z)+i},\quad z\in \bbC_+.
\end{equation}

If $\widehat A \ne (\widehat A )^*$ is a  maximal dissipative extension of $\dot A$,
$$
\Im(\widehat A f,f)\ge 0, \quad f\in \dom(\widehat A ),
$$
then $\widehat A$ is automatically quasi-self-adjoint  \cite{ABT, MT-S,Phil,St68} 
and therefore
\begin{equation}\label{parpar}
g_+-\varkappa g_-\in \dom
 (\widehat A  )\quad \text{for some }
|\varkappa|<1.
\end{equation}

 By definition, we call $\varkappa$ the {\it von Neumann parameter}  of the triple  $\fA=(\dot A,  \widehat A,A)$.

Given \eqref{rss} and   \eqref{parpar},  define 
{\it the characteristic function} $S_\fA(z)$ associated with   the triple  $\fA=(\dot A,  \widehat A,A)$  as follows (see \cite{MT-S}, cf. \cite{Lv1})
\begin{equation}\label{ch12}
S_\fA(z)=\frac{s(z)-\varkappa} {\overline{ \varkappa }\,s(z)-1}, \quad z\in \bbC_+,
\end{equation}
where  $s(z)=s_{(\dot A, A)}(z)$ is the Liv\v{s}ic function associated with the pair
$(\dot A, A)$.

We remark that  the von Neumann parameter $\varkappa$ can explicitly be evaluated in terms of the characteristic function of the triple  $( \dot A, \widehat A , A)$ as
\begin{equation}\label{sac}
\varkappa =S_{( \dot A, \widehat A , A)}(i).
\end{equation}
Moreover, as it follows from \eqref{ch12},  the Liv\v{s}ic function associated with the pair
$(\dot A, A)$  admits the representation
\begin{equation}\label{obr}
s(z)=s_{(\dot A, A)}(z)=\frac{S_\fA(z)-\varkappa} {\overline{ \varkappa }\,S_\fA(z)-1}, \quad z\in \bbC_+,
\end{equation}

Summing up, the calculation of the main characteristics (unitary invariants) of the triple $\fA=(\dot A, \widehat A, A)$  can be performed in accordance with the following  algorithm: at the first step, deficiency elements $g_\pm$  that satisfy the relation \eqref{start} are to be found, then one calculates  the 
Weyl-Titchmarsh \eqref{WTF} or/and   Liv\v{s}ic \eqref{charsum} functions
depending of whether  the resolvent of 
the reference self-adjoint operator $A$ or the ``deficiency" field $\bbC_+\ni z\mapsto g_z$ \eqref{start2}
 is available.
In the next step, the von Neumann parameter $\varkappa$  of the triple  \eqref{parpar} is  to be determined and finally, one arrives at the characteristic function $S_\fA(z)$ of  the triple given by \eqref{ch12}.

One can take a different point of view  as
presented in \cite{MT-S}  and in this  way  we come  to     a {\it functional model} of a prime
 dissipative triple in which the characteristic functions is considered as a parameter of the model.

\section{A functional model of a triple}

Given a contractive analytic map $S$,
\begin{equation}\label{chchch}
S(z)=\frac{s(z)-\varkappa} {\overline{ \varkappa }\,s(z)-1}, \quad z\in \bbC_+,
\end{equation}
where $|\varkappa|<1$ and $s(z)$ is
an  analytic, contractive function in $\bbC_+$
satisfying the Liv\v{s}ic criterion \cite{L46} (also see \cite[Theorem 1.2]{MT-S}),
that is,
\begin{equation}\label{vsea0}
s(i)=0\quad \text{and}\quad \lim_{z\to \infty}
z(s(z)-e^{2i\alpha})=\infty \quad \text{for all} \quad  \alpha\in
[0, \pi),
\end{equation}
$$
0< \varepsilon \le \text{arg} (z)\le \pi -\varepsilon,
$$
introduce the function
\begin{equation}\label{s&M}
M(z)=\frac1i\cdot\frac{s(z)+1}{s(z)-1},\quad z\in \bbC_+.
\end{equation}
In this case, the function $M(z)$ admits the representation, 
\begin{equation}\label{murep}
M(z)=\int_\bbR \left
(\frac{1}{\lambda-z}-\frac{\lambda}{1+\lambda^2}\right )
d\mu(\lambda), \quad z\in \bbC_+,
\end{equation}
for some infinite Borel measure $\mu(d\lambda)$,
\begin{equation}\label{infm}
\mu(\bbR)=\infty,
\end{equation}
such that 
\begin{equation}\label{normmu}
\int_\bbR\frac{d\mu(\lambda)}{1+\lambda^2}=1.
\end{equation}

In the Hilbert space $L^2(\bbR;d\mu)$ introduce
 the    (self-adjoint)
operator  $\cB$  of multiplication   by  independent variable 
 on
\begin{equation}\label{nacha1}
\Dom(\cB)=\left \{f\in \,L^2(\bbR;d\mu) \,\bigg | \, \int_\bbR
\lambda^2 | f(\lambda)|^2d \mu(\lambda)<\infty \right \}
\end{equation} 
and denote by  $\dot \cB$  its symmetric
 restriction
on
\begin{equation}\label{nacha2}
\Dom(\dot \cB)=\left \{f\in \Dom(\cB)\, \bigg | \, \int_\bbR
f(\lambda)d \mu(\lambda) =0\right \}.
\end{equation}
Next, introduce
 $\widehat \cB$  as   the dissipative quasi-self-adjoint extension  of the  symmetric operator  $\dot \cB$
 on
\begin{equation}\label{nacha3}
\Dom(\widehat \cB)=\dom (\dot \cB)\dot +\linspan\left
\{\,\frac{1}{\lambda -i}- \varkappa\frac{1}{\lambda +i}\right \},
\end{equation}
where the von Neumann parameter $\varkappa$ of  the triple
$(\dot \cB, \widehat \cB, \cB)$ is  given by 
$$\varkappa=S(i).
$$

Notice that in this case,
\begin{equation}\label{kone}
\Dom(\cB)=
\dom (\dot \cB)\dot +\linspan\left
\{\,\frac{1}{\lambda -i}- \frac{1}{\lambda+i}\right \}.
\end{equation}

We will refer to the triple  $(\dot \cB,   \widehat \cB,\cB)$ as
{\it  the model
 triple } in the Hilbert space $L^2(\bbR;d\mu)$ parameterized by the characteristic functions $S_\fB(z)=S(z)$.
 
\begin{remark}\label{corespec}
 Notice that the core of the spectrum $\widehat \sigma(\dot \cB)$ of the symmetric operator $\dot \cB$ can be characterized as
\begin{equation}\label{corr}\widehat \sigma(\dot \cB)=\left \{s\in \bbR\,\bigg |\,\int_\bbR\frac{d\mu(\lambda)}{(\lambda-s)^{2}}=\infty \quad \text{and } \quad \mu(\{s\})=0\right \}.
\end{equation}
Here  $\mu(d\lambda) $ is the measure \eqref{murep} associated with the pair $(\dot \cB, \cB)$ from the model representation of the triple $\fB$. 
 
Recall that the core  $  \widehat \sigma(\dot \cB) $ of the spectrum of a prime
\footnote{
A symmetric operator
 $\dot A$ is called  a prime operator
if there is no   (non-trivial) subspace invariant under $\dot A$
such that the restriction of $\dot A$ to this subspace is self-adjoint.
}
 symmetric operator $ \dot \cB$ is just the complement of the set of its quasi-regular points,
 $$
 \widehat \sigma(\dot \cB)=\bbR\setminus   \widehat  \rho(\dot \cB).
 $$

 For the convenience of the reader we provide a short proof of \eqref{corr}.
 \begin{proof} As in \cite{SW},
introduce the set 
 $$
 \cP=\left \{s\in \bbR\,\bigg |\,\int_\bbR\frac{d\mu(\lambda)}{(\lambda-s)^{2}}<\infty\quad \text{or } \quad \mu(\{s\})>0\right \},
 $$ 
 which apparently is the  complement of the right hand side of \eqref{corr}. 

We claim that $\cP$ coincides with the set  of quasi-regular points  $\widehat \rho (\dot \cB)$ of the symmetric (prime) operator $\dot \cB$.

Indeed,  it is well known (see, e.g.,  \cite{GM,SW}) that the set $\cP$ does not depend on the choice of the self-adjoint (reference) extension $\cB$ of the  symmetric operator $\dot \cB $.
 Therefore, given $s\in \cP$, without loss we may assume that $\Ker (\cB-sI)=\{0\}$.
 Hence,
 $
 s\in \rho(\cB) \subseteq \widehat \rho(\dot \cB)$, which proves the inclusion
 $$\cP\subseteq \widehat \rho(\dot \cB).
 $$
 On the other hand, if $t\in \widehat \rho(\dot \cB)$, one can always choose a self-adjoint  extension $\cB'$ of $\dot \cB$ such that $t$ is an eigenvalue of $\cB'$. If $d\mu'(d\lambda)$ is the  representing measure associated with the corresponding model representation for  the pair $(\dot \cB, \cB')$
 in the space $L^2(\bbR, d\mu')$, then 
 $$\mu'(\{t\})>0
 $$
 and therefore, $t\in \cP$ (here we have again used  the  independence of the set $\cP$ from the choice of the (reference)  extension $\cB'$).
 Thus,
 $
\widehat \rho(\dot \cB) \subseteq \cP
$
and hence
\begin{equation}\label{P=B}
\cP= \widehat   \rho(\dot \cB),
\end{equation}
 as stated 
and \eqref{corr} follows. 
\end{proof}
\end{remark}

Let  $\dot A$ be  a densely defined symmetric operator with deficiency indices $(1,1)$,  $\widehat A$ a maximal non-selfadjoint dissipative extension of $\dot A$
and 
$ A$ its self-adjoint (reference) extension. Given the triple $\fA=(\dot A, A, \widehat A )$, 
recall that if the symmetric operator $\dot A $ is prime,
then both 
 the Weyl-Titchmarsh function $M(z)$ and  the Liv\v{s}ic function $s(z)$ are
 complete unitary invariants  of the  pair $(\dot A, A)$,  while
the characteristic function $S_\fA(z)$   is a complete unitary invariant of the triple
$\fA$  (see  \cite{MT-S}).
 In particular,
the von Neumann parameter $\varkappa$ is a unitary invariant of the  triple
$(\dot A, \widehat A,A )$, 
not a complete unitary invariant though.
Also notice that if the  symmetric operator  $\dot A$ from a triple 
 in the Hilbert space $\cH$ is not prime and  $\dot A'$ is the  prime part of $ \dot A$
  in a reducing subspace $\cH'\subset \cH$,  then  the triples
$(\dot A, \widehat A, A  )$ and $(\dot A|_{\cH'}, \widehat A|_{\cH'}, A|_{\cH'} )$ have the same characteristic function,
which in many cases allows one  to focus on the case where $\dot A$ is a prime operator.

\begin{theorem}[{\cite[Theorems 1.4, 4.1]{MT-S}}]\label{unitar}

Suppose that $\dot A$ and $\dot B$
 are prime, closed, densely defined  symmetric operators
with deficiency indices $(1,1)$. Assume, in addition, that
  $A$ and $B$ are some self-adjoint
extensions of $\dot A$ and $\dot B$ and that
$\widehat A$ and $\widehat B$ are maximal dissipative
extensions of $\dot A$ and $\dot B$, respectively
$(\widehat A\ne (\widehat A)^*$ and  
$\widehat B\ne (\widehat B)^*)$.

Then,
\begin{itemize}
\item[(i)] the triples  $(\dot A,\widehat A, A)$ and $(\dot
B,\widehat B,  B)$
 are mutually unitarily equivalent\footnote{We say that triples  of operators
$(\dot A, \widehat A, A)$ and $(\dot B,\widehat B,  B)$
 in Hilbert spaces $\cH_A$ and $\cH_B$
are mutually unitarily equivalent if there is
a unitary map $\cU$ from $\cH_A$ onto $\cH_B$ such that
$\dot B=\cU\dot A\cU^{-1}$, $\widehat  B=\cU\widehat  A\cU^{-1}$, and  $ B=\cU A\cU^{-1}$.}
if, and only if, the corresponding characteristic functions   of the triples coincide;

\item[(ii)] the triple  $(\dot A,\widehat A,  A)$ is mutually unitarily
equivalent to the model triple 
$(\dot \cB, \widehat\cB,  \cB )$
in
the Hilbert space $L^2(\bbR;d\mu)$, where $\mu(d\lambda)$ is the
representing measure for the Weyl-Titchmarsh function $M(z)=M_{(\dot A,
A)}(z)$ associated with the pair $(\dot A, A)$.
\end{itemize}

In particular,
\begin{itemize}\item[(iii)] the pairs $(\dot A, A)$ and $ (\dot B, B)$ are mutually unitary equivalent if and only if $M_{(\dot A, A)}(z)=M_{(\dot B, B)}(z)$.
\end{itemize}

\end{theorem}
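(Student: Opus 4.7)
The plan is to prove (ii) first and then obtain (i) and (iii) as corollaries, since a common functional model determined by the unitary invariants $M$ and $\varkappa$ immediately supplies the required unitary equivalences. The easy (``only if'') direction of each of (i) and (iii) will be routine: the ingredients of \eqref{ch12} and \eqref{WTF}---the deficiency subspaces $\Ker(\dot A^* \mp iI)$, the normalization condition \eqref{rss}, the resolvent of the reference self-adjoint extension, and the von Neumann parameter $\varkappa$ of \eqref{parpar}---are manifestly preserved by any unitary equivalence of triples (respectively pairs), so $S_\fA(z)$ and $M(z)$ are unitary invariants by construction.

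For (ii) I would fix normalized deficiency elements $g_\pm$ as in \eqref{start}--\eqref{rss}. The first step is the standard observation that primeness of $\dot A$ forces $g_+$ to be cyclic for $A$: a direct computation using $\dot A^* g_\pm = \pm i g_\pm$ gives $(A+iI)(g_+ - g_-) = 2i g_+$, so $g_-$ lies in the cyclic subspace of $g_+$, and a variant of this argument yields every $g_z \in \Ker(\dot A^* - zI)$, whence the orthogonal complement of that cyclic subspace would reduce $\dot A$ to a self-adjoint part and so must be trivial. The spectral theorem applied to $(A, g_+)$ then produces a unitary $\cU_0 \colon \cH \to L^2(\bbR; d\rho)$, with $\rho$ the spectral measure of $g_+$, sending $A$ to multiplication by $\lambda$ and $g_+$ to the constant function $1$. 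A direct spectral computation of \eqref{WTF} gives the Herglotz representation \eqref{murep} with $d\mu(\lambda) = (1+\lambda^2)\, d\rho(\lambda)$, and the rescaled operator $\cU f := (\lambda - i)(\cU_0 f)$ is then a unitary $\cH \to L^2(\bbR; d\mu)$ satisfying $\cU A \cU^{-1} = \cB$, $\cU g_+ = (\lambda - i)^{-1}$, and, via \eqref{rss}, $\cU g_- = (\lambda + i)^{-1}$.

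It remains to match the three model domains with the images under $\cU$. The identity $\cU \dom(A) = \dom(\cB)$ reduces to the defining condition in \eqref{nacha1} and holds by the spectral theorem. The symmetric domain $\dom(\dot A)$ is a codimension-one subspace of $\dom(A)$ cut out by a single linear functional attached to $g_\pm$, and under $\cU$ that functional translates into the vanishing-integral condition \eqref{nacha2} (well defined on $\dom(\cB)$ by Cauchy--Schwarz together with the normalization \eqref{normmu}), giving $\cU \dom(\dot A) = \dom(\dot\cB)$. For the dissipative extension I would invoke \eqref{sac}: the parameter $\varkappa$ of \eqref{parpar} equals $S_\fA(i)$ and is therefore an intrinsic quantity depending only on the triple, so comparing \eqref{parpar} with \eqref{nacha3} yields $\cU \widehat A \cU^{-1} = \widehat \cB$ and completes (ii).

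With (ii) established the converse of (i) is immediate: two triples with identical characteristic functions share, via \eqref{obr} and uniqueness of the Herglotz representation for $M(z)$, the same Liv\v{s}ic and Weyl--Titchmarsh functions, the same measure $\mu$, and the same $\varkappa = S_\fA(i)$, so by (ii) both are unitarily equivalent to the \emph{same} model triple in $L^2(\bbR; d\mu)$, and the composition of the two model unitaries furnishes the required equivalence. Part (iii) is the analogous consequence on dropping the dissipative component. I expect the main obstacle to be precisely the intertwining step $\cU \widehat A \cU^{-1} = \widehat \cB$ in (ii), since $\cU$ is constructed solely from the spectral data of the pair $(\dot A, A)$ and has no \emph{a priori} connection to $\widehat A$; the argument must route through the identity \eqref{sac}, whose validity in turn hinges on verifying that the right-hand side of \eqref{ch12} is independent of the admissible choices of $g_\pm$ and $g_z$ made in \eqref{start}--\eqref{start2}.
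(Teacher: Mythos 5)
The paper does not actually prove this theorem --- it is imported verbatim from \cite{MT-S} (Theorems 1.4 and 4.1) --- so there is no internal proof to compare against; your reconstruction follows exactly the functional-model route that the paper's Section 3 is built around (cyclicity of $g_+$ from primeness, the spectral representation with $d\mu=(1+\lambda^2)\,d\rho$, matching of the domains \eqref{nacha1}--\eqref{nacha3}, and deduction of (i) and (iii) by passing both triples through the common model), and it is correct in substance. The one slip is the definition of the rescaled unitary: you want $\cU f=(\lambda-i)^{-1}(\cU_0 f)$, not $(\lambda-i)(\cU_0 f)$, since $|\lambda-i|^{2}=1+\lambda^{2}$ must cancel the density of $d\mu$ relative to $d\rho$ and since you yourself then use $\cU g_+=(\lambda-i)^{-1}$.
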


For the further reference  recall the following resolvent formula \cite{MT-S}.
\begin{theorem}\label{modeldef} Suppose that
$\fB=(\dot \cB, \widehat \cB, \cB)$ is the model triple in the Hilbert space
$L^2(\bbR;d\mu) $ given by \eqref{nacha1}-\eqref{nacha3}.

 Then the
resolvent
 of the model dissipative operator $\widehat \cB$  in
$L^2(\bbR;d\mu) $ has the form
\begin{equation}\label{rezfor1}
(\widehat \cB- zI )^{-1}=(\cB- zI )^{-1}-p(z)(\cdot\, ,
g_{\overline{z}})g_z ,
\end{equation}
 with
\begin{equation}\label{rezfor12}
p(z)=\left (M_{(\dot \cB,
\cB)}(z)+i\frac{\varkappa+1}{\varkappa-1}\right )^{-1},
\end{equation}
$$z\in\rho(\widehat \cB)\cap \rho(\cB).
$$
Here $M_{(\dot \cB,
\cB)}(z)$ is the Weyl-Titchmarsh function associated with the pair
 $(\dot \cB, \cB)$ continued to the lower half-plane by the Schwarz reflection
principle, $\varkappa$ is the von Neumann parameter of the  triple $\fB$,
and the deficiency elements $g_z$,
$$
g_z\in \Ker ((\dot \cB)^*-zI), \quad z\in \bbC\setminus \bbR,
$$
are given by
\begin{equation}\label{defelrep}
g_z(\lambda)=\frac{1}{\lambda-z} \quad
\text{for $\mu$-almost all  } \lambda\in \bbR.
\end{equation}

\end{theorem}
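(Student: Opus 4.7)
The approach is to derive the formula as a standard Krein-type rank-one perturbation identity relating the resolvents of two quasi-self-adjoint extensions of the symmetric operator $\dot\cB$. Fix $z\in\rho(\widehat\cB)\cap\rho(\cB)$ and $\phi\in L^2(\bbR;d\mu)$, and set
$$u := (\widehat\cB - zI)^{-1}\phi, \qquad f := (\cB - zI)^{-1}\phi.$$
Both $\widehat\cB$ and $\cB$ extend $\dot\cB$, hence are restrictions of $\dot\cB^*$. Therefore $u, f\in\Dom(\dot\cB^*)$ with $(\dot\cB^*-zI)u = (\dot\cB^*-zI)f = \phi$, so
$$u - f \in \Ker(\dot\cB^* - zI) = \linspan\{g_z\},$$
the one-dimensionality following from \eqref{defelrep}. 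Consequently $u = f + \mu(\phi)g_z$ for a unique scalar $\mu(\phi)$, and the entire task reduces to computing this scalar.

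To compute $\mu(\phi)$, I would employ the von Neumann decomposition
$$\Dom(\dot\cB^*) = \Dom(\dot\cB)\dot+\linspan\{g_i\}\dot+\linspan\{g_{-i}\}.$$
An element $v = v_0 + ag_i + bg_{-i}$ with $v_0\in\Dom(\dot\cB)$ lies in $\Dom(\widehat\cB)$ iff $b + \varkappa a = 0$ (by comparison with \eqref{nacha3}) and in $\Dom(\cB)$ iff $a = b$ (by \eqref{kone}). Hence, decomposing $f$ and $g_z$ in this form and imposing the boundary condition on $u = f + \mu g_z$ will isolate $\mu$.

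For $f\in\Dom(\cB)$, using $g_i - g_{-i} = 2i(\lambda^2+1)^{-1}$ together with the normalization $\int(\lambda^2+1)^{-1}d\mu = 1$ from \eqref{normmu}, one finds $a_f = b_f = \frac{1}{2i}\int f\,d\mu = \frac{1}{2i}(\phi, g_{\overline z})_{L^2(\bbR;d\mu)}$. For $g_z$ itself, the key identity is the resolvent relation
$$g_z = g_i + (z-i)(\cB - zI)^{-1}g_i,$$
which together with the evaluation $M_{(\dot\cB,\cB)}(i) = i$ (immediate from \eqref{murep} and \eqref{normmu}) and a partial-fraction reduction of the integral $\int [(\lambda - z)(\lambda - i)]^{-1}d\mu$ to $(M(z) - i)/(z - i)$, delivers the coefficients
$$a_{g_z} = \frac{M(z)+i}{2i}, \qquad b_{g_z} = \frac{i-M(z)}{2i}.$$

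Finally, imposing $(b_f + \mu b_{g_z}) + \varkappa(a_f + \mu a_{g_z}) = 0$ and simplifying using $a_f = b_f$ yields, after a short algebraic manipulation,
$$\mu = -\left(M_{(\dot\cB,\cB)}(z) + i\frac{\varkappa+1}{\varkappa-1}\right)^{-1}(\phi, g_{\overline z})_{L^2(\bbR;d\mu)},$$
which is precisely \eqref{rezfor1}--\eqref{rezfor12}. The main technical care lies in step three: one must correctly handle the conditionally convergent Herglotz integral defining $M(z)$ by observing that the \emph{difference} $M(z) - M(i) = \int[(\lambda-z)^{-1} - (\lambda-i)^{-1}]d\mu$ is absolutely convergent, so that the resolvent identity can be substituted unambiguously into the decomposition of $g_z$. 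Once this is done cleanly, the rest of the argument is pure linear algebra, and the formula $p(z)^{-1} = M(z) + i(\varkappa+1)/(\varkappa-1)$ emerges automatically from the boundary condition.
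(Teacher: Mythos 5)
The paper itself does not prove Theorem \ref{modeldef}: the statement is recalled verbatim from \cite{MT-S}, so there is no in-paper argument to compare yours against. Your strategy is nevertheless the standard (and correct) one for such Krein-type formulas: write $(\widehat\cB-zI)^{-1}\phi-(\cB-zI)^{-1}\phi$ as a scalar multiple of $g_z$ and determine the scalar from the von Neumann boundary conditions. Your coefficients $a_{g_z}=\frac{M(z)+i}{2i}$ and $b_{g_z}=\frac{i-M(z)}{2i}$ are correct, as are the evaluation $M(i)=i$ and the remark that one must pass to the absolutely convergent difference $M(z)-M(i)$ before using the resolvent identity.

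There is, however, one concrete sign error which, taken literally, breaks the final computation. With your convention $v=v_0+ag_i+bg_{-i}$, equation \eqref{kone} says that $\Dom(\cB)$ is spanned over $\Dom(\dot\cB)$ by $g_i-g_{-i}$, so membership in $\Dom(\cB)$ is the condition $a+b=0$, not $a=b$; accordingly $f=(\cB-zI)^{-1}\phi$ has $a_f=-b_f=\frac{1}{2i}(\phi,g_{\overline z})$ rather than $a_f=b_f$. (You in fact use the correct rule implicitly when extracting $a_{g_z},b_{g_z}$ from $g_z=g_i+(z-i)(\cB-zI)^{-1}g_i$ --- note the minus sign in your $b_{g_z}$ --- so the two halves of your computation are mutually inconsistent.) Substituting $a_f=b_f$ into your boundary condition $(b_f+\mu b_{g_z})+\varkappa(a_f+\mu a_{g_z})=0$ gives $\mu=-(1+\varkappa)a_f\left(b_{g_z}+\varkappa a_{g_z}\right)^{-1}$, which differs from the asserted answer by the factor $-\frac{1+\varkappa}{1-\varkappa}$ and is therefore wrong. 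With the corrected $b_f=-a_f$ the condition reads $(\varkappa-1)a_f+\mu\left(b_{g_z}+\varkappa a_{g_z}\right)=0$, and since $b_{g_z}+\varkappa a_{g_z}=\frac{i(1-\varkappa)}{2}\left(M(z)+i\frac{\varkappa+1}{\varkappa-1}\right)$ one indeed obtains $\mu=-p(z)(\phi,g_{\overline z})$, i.e.\ \eqref{rezfor1}--\eqref{rezfor12}. So the argument is salvageable by a one-line fix, but as written the stated intermediate claims do not imply the stated conclusion.
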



\begin{remark}\label{obrat1}  
Notice that if   $z=0$
is a quasi-regular point for the symmetric operator $\dot \cB$, $0\in \widehat \rho(\dot \cB)$,  and therefore
$$
0\in\rho(\cB)\cap\rho(\widehat \cB), 
$$
then the inverse $\widehat \cB^{-1}$ is a rank-one perturbation of
 the bounded self-adjoint operator $\cB^{-1}$ 
and the following resolvent formula
$$
\widehat \cB^{-1}=\cB^{-1}-p Q
$$
holds.
Here
\begin{equation}\label{pppp}
p=\left (M(0)+
i\frac{\varkappa+1}{\varkappa-1}\right)^{-1},
\end{equation}
$Q$ is a rank-one self-adjoint operator given by
$$
(Qf)(\lambda)=\frac{1}{\lambda}
\int_\bbR \frac{f(s)}{s}d\mu(s), \quad
\text{ $\mu$-a.e. } \lambda \in \bbR,
$$
and $M(0)=M(0+i0)$ is the boundary value of the Weyl-Titchmarsh function associated with the pair $(\dot \cB, \cB)$
at the point zero.

\end{remark}


\section{The invariance principle }

Let  $\dot A$ be  a densely defined symmetric operator with deficiency indices $(1,1)$,   $ A$ its self-adjoint (reference) extension and
$\widehat A$ a maximal non-self-adjoint dissipative extension of $\dot A$.
In this case, the dissipative operator $\widehat A$ is automatically a quasi-self-adjoint extension of $\dot A$ (see, e.g., \cite{MT-S}),
 $$
\dot A\subset \widehat A\subset  (\dot A)^*
$$
and 
$$\dot A= \widehat A |_{\Dom (\widehat A)\cap \Dom(\widehat A^*)}.$$ 
Throughout this Note such kind of triples $\fA=(\dot A, \widehat A, A)$ will be called {\it regular}.

First, 
recall the concept of  an invariance principle  for the characteristic function of a triple
 $\fA=(\dot A,  \widehat A,A)$
 with respect to   affine
transformations of the 
  operators $\dot A$,  $\widehat A$ and $A$ from $\fA$.

Given an affine transformation of the upper half-plane,
$
f(z)=az+b
$,
 $a, b\in \bbR$, $a>0$,  introduce the triple
 $$
 f(\fA)=(f(\dot A),f ( \widehat A),f(A)),
 $$
where
$$
f(X)=aX+bI \quad \text{on}\quad  \Dom (X),\quad X=\dot A, \widehat A, A.
$$

 Clearly, $f(\dot A)$ is a symmetric operator with deficiency indices $(1,1)$,
 $f(A)$ is its self-adjoint extension, and $f(\widehat A)$ is a quasi-self-adjoint dissipative extension of $f(\dot A)$. Therefore, the triple   $f(\widehat A)$ is regular and hence 
 the characteristic function $S_\fA(z)$  of the triple $f(\fA)$ is well defined.
 
 Along with the characteristic function $S_\fA(z)$   introduce the normalized characteristic function  $\widehat S_\fA(z)$  of the triple $\fA$ as 
\begin{equation}\label{norm}
 \widehat S_\fA(z)=\frac{1-\overline{S_\fA(i)}}{1-S_\fA(i)}\cdot S_\fA(z), \quad z\in \bbC_+.
 \end{equation}

\begin{theorem}[{\cite[Theorem F.1]{MTH,MTBook}}]\label{coin}
Let $\fA=(\dot A, \widehat A, A)$ be a regular triple.
Suppose that
$
f(z)=az+b
$
with $a, b\in \bbR$, $a>0$, is an affine transformation.

Then for the  normalized characteristic functions of the triples $\fA$ and $f(\fA)$ the  following invariance principle 
\begin{equation}\label{inveq}
 \widehat S_{f(\fA)}\circ f= \widehat  S_{\fA}
\end{equation}
holds.

\end{theorem}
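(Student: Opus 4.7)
My approach is to work within the functional model and reduce both $S_{\fA}$ and $S_{f(\fA)}$ to explicit rational expressions in the Weyl--Titchmarsh function $M(z)$ of the pair $(\dot A, A)$. By Theorem \ref{unitar}(ii), without loss of generality assume $\fA = (\dot\cB, \widehat\cB, \cB)$ is the model triple in $L^2(\bbR; d\mu)$ with $g_z(\lambda) = 1/(\lambda - z)$. Since $f(z) = az + b$ is affine with $a > 0$, $\Ker(f(\dot\cB)^* \mp iI) = \Ker(\dot\cB^* - w_\pm I) = \Span\{g_{w_\pm}\}$, where $w_\pm := f^{-1}(\pm i) = (\pm i - b)/a$. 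A direct model calculation yields $\|g_{w_+}\| = \|g_{w_-}\|$ and $g_{w_+} - g_{w_-} \in \Dom(\cB) = \Dom(f(\cB))$, so the normalized deficiency elements $h_\pm := g_{w_\pm}/\|g_{w_\pm}\|$ satisfy both \eqref{start} and \eqref{rss} for the triple $f(\fA)$.

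Next, I compute the Liv\v{s}ic function of $(f(\dot A), f(A))$. Using the identity $(g_z, g_w) = (M(z) - \overline{M(w)})/(z - \overline{w})$ (a consequence of partial fractions and \eqref{murep}, with $M$ extended to $\bbC_-$ by Schwarz reflection) together with the factorization $\zeta \mp i = a(z - w_\pm)$ for $\zeta = f(z)$, the defining ratio in \eqref{charsum} collapses to
\begin{equation*}
s_f(f(z)) = \frac{M(z) - M_0}{M(z) - \overline{M_0}}, \qquad M_0 := M(w_+) \in \bbC_+.
\end{equation*}
For the new von Neumann parameter $\varkappa_f$, I use the identities $g_{w_\pm} = g_\pm + (w_\pm \mp i)(A - w_\pm I)^{-1} g_\pm$ to write $g_{w_\pm} - g_\pm = \phi_\pm + \alpha_\pm(g_+ - g_-)$ with $\phi_\pm \in \Dom(\dot\cB)$; the model characterization $\Dom(\dot\cB) = \{f \in \Dom(\cB) : \int f\, d\mu = 0\}$ then yields $\alpha_+ = (M_0 - i)/(2i)$ and $\alpha_- = -\overline{\alpha_+}$. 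Requiring $g_{w_+} - \varkappa_f g_{w_-}$ to lie in $\Dom(\widehat A) = \Dom(\dot A) \dot+ \Span\{g_+ - \varkappa g_-\}$ gives a $2 \times 2$ linear system whose solution is
\begin{equation*}
\varkappa_f = \frac{M_0 - N}{\overline{M_0} - N}, \qquad N := i\,\frac{1 + \varkappa}{1 - \varkappa} \in \bbC_+.
\end{equation*}

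Substituting these formulas into \eqref{ch12} for both triples and simplifying, both characteristic functions collapse to a common Blaschke-type factor in $M(z)$:
\begin{equation*}
S_\fA(z) = -\frac{1 - \varkappa}{1 - \overline{\varkappa}} \cdot \frac{M(z) - N}{M(z) - \overline{N}}, \qquad S_{f(\fA)}(f(z)) = \frac{M_0 - \overline{N}}{\overline{M_0} - N} \cdot \frac{M(z) - N}{M(z) - \overline{N}}.
\end{equation*}
Hence $S_{f(\fA)} \circ f = c_f \, S_\fA$ with the unimodular constant $c_f := -\frac{1 - \overline{\varkappa}}{1 - \varkappa} \cdot \frac{M_0 - \overline{N}}{\overline{M_0} - N}$. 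A direct computation starting from $1 - \varkappa_f = (\overline{M_0} - M_0)/(\overline{M_0} - N)$ shows that $(1 - \overline{\varkappa_f})/(1 - \varkappa_f) \cdot c_f = (1 - \overline{\varkappa})/(1 - \varkappa)$; this is precisely the cancellation required for the normalization \eqref{norm} to promote $S_{f(\fA)} \circ f = c_f S_\fA$ into the desired invariance $\widehat S_{f(\fA)} \circ f = \widehat S_\fA$. The main obstacle is the computation of $\varkappa_f$: this is the step that genuinely forces one to relate two different von Neumann decompositions of $\Dom(\dot A^*)$ (anchored at $\pm i$ versus at $w_\pm$), and it is where the explicit model structure is indispensable.
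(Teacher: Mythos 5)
The paper does not prove Theorem \ref{coin}; it is imported verbatim from \cite[Theorem F.1]{MTH,MTBook}, so there is no in-paper argument to compare against. Your computation, however, is correct, and I have checked the key identities: with $w_\pm=(\pm i-b)/a$ one indeed has $\overline{w_+}=w_-$, hence $\|g_{w_+}\|=\|g_{w_-}\|$ and $g_{w_+}-g_{w_-}\in\Dom(\cB)$; the reproducing identity $(g_z,g_w)=(M(z)-\overline{M(w)})/(z-\overline{w})$ together with $f(z)\mp i=a(z-w_\pm)$ gives $s_f\circ f=(M-M_0)/(M-\overline{M_0})$; the coefficients $\alpha_+=(M_0-i)/(2i)$, $\alpha_-=-\overline{\alpha_+}$ follow from $\int(g_{w_\pm}-g_\pm)\,d\mu=M(w_\pm)\mp i$ and $\int(g_+-g_-)\,d\mu=2i$ (using \eqref{normmu}); and the resulting $\varkappa_f=(M_0-N)/(\overline{M_0}-N)$ plugged into \eqref{ch12} yields exactly the Blaschke form $S_\fA(z)=-\frac{1-\varkappa}{1-\overline{\varkappa}}\cdot\frac{M(z)-N}{M(z)-\overline{N}}$ that the paper itself uses in the proof of Lemma \ref{tak}, so your intermediate formulas are consistent with the paper's own machinery. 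The final cancellation $\frac{1-\overline{\varkappa_f}}{1-\varkappa_f}\,c_f=\frac{1-\overline{\varkappa}}{1-\varkappa}$ checks out, since $\frac{1-\overline{\varkappa_f}}{1-\varkappa_f}=-\frac{\overline{M_0}-N}{M_0-\overline{N}}$. Two minor points you should make explicit: (a) the reduction ``WLOG $\fA$ is the model triple'' via Theorem \ref{unitar}(ii) requires $\dot A$ prime, whereas Theorem \ref{coin} does not assume this; you need the paper's observation that passing to the prime part preserves the characteristic function, together with the (easy) fact that for affine $f$ this passage commutes with $X\mapsto f(X)$ — alternatively, note that none of your computations actually uses primality, only the von Neumann decompositions and the kernel function identity, both of which hold abstractly; (b) the choice of normalized deficiency elements $h_\pm$ satisfying \eqref{start}--\eqref{rss} is unique only up to a common unimodular factor, so you should remark (one line) that $s_f$, $\varkappa_f$, and hence $S_{f(\fA)}$ are insensitive to this freedom. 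Neither point affects the validity of the argument.
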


If $\fA$ is a regular triple and  
 $f$ is a linear-fractional automorphism of the upper half-plane  $\bbC_+$, 
$f\in SL_2(\bbR)$,
 one can also introduce the triple 
$$
f(\fA)=(f(\dot A), f(\widehat A),f (A)),
$$
where the function $f(X)$ of a linear operator $X$ is understood as
$$f(X)=(aX+cI)(cX+dI)^{-1} \quad \text{on} \quad  \Ran \left ((cX+dI)\right ), \quad X=\dot A, \widehat A, A,
$$
provided that $\Ker(cX+dI)=\{0\}$.
However, if the preimage $f^{-1}(\infty)$ is a quasi-regular point for the symmetric operator $A$, then the operator $f(\dot A)$ is not densely defined and therefore the triple $f(\fA)$ is not regular.

In fact, if   $\dot A$ is a prime  symmetric operator and
 $f\in SL_2(\bbR)$ is
a linear-fractional automorphism, then we have the following alternative: either the triple  
$$
f(\fA)=(f(\dot A), f(\widehat A),f (A))
$$ is regular or $f(\widehat A)$ is a bounded dissipative operator. 

The following main result of the Note  takes care of the two possible outcomes
in the alternative  mentioned above.

\begin{theorem}\label{main} 
Suppose that   $\fA=(\dot A, \widehat A, A)$ is a regular triple.  Assume, in addition, that   $\dot A$ is a prime  symmetric operator.
Suppose that  $f\in SL_2(\bbR)$ is a linear-fractional automorphism of the upper half-plane. 

Then, 
\begin{itemize}
\item[(i)]
if $\omega=f^{-1}(\infty) $
belongs  to  the spectrum of  the dissipative operator $\widehat  A$, equivalently, to the core of the spectrum of  the symmetric operator $\dot A$, then the triple 
$$
f(\fA)=(f(\dot A), f(\widehat A),f (A))
$$
is regular. 

In this case,
 for the  normalized characteristic functions  $\widehat S_{\fA}(z)$ and $\widehat S_{f(\fA)}(z)$ 
 of the triples $\fA$ and $f(\fA)$ the  following invariance principle 
\begin{equation}\label{inveqq}
 \widehat S_{f(\fA)}\circ f=   \widehat  S_{\fA}
 \end{equation}
holds;
\item[(ii)]
if   $\omega=f^{-1}(\infty) $ is a regular point  of the dissipative operator  $\widehat  A$, equivalently, 
 a quasi-regular point  of  the symmetric operator $\dot A$, then the operator $f(\widehat A)$  is well defined as a bounded dissipative operator.
 
In this case, for the  normalized characteristic function   $\widehat S_{\fA}(z) $ of  the triple $\fA$ and the characteristic function  $S_{f(\widehat A)}(z)$ of the bounded dissipative operator $f(\widehat A)$ the  following invariance principle 

\begin{equation}\label{inveq2}
S_{f(\widehat A)}\circ f=\frac{1}{S_{\fA}\left (\omega+i0\right )}\cdot S_{\fA}
\end{equation}
holds.

\end{itemize}

\end{theorem}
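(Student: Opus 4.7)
The strategy is to reduce to the functional model and then carry out an explicit computation. By Theorem \ref{unitar}(ii) we may assume $\fA=(\dot\cB,\widehat\cB,\cB)$ is the model triple on $L^2(\bbR,d\mu)$, with $\cB$ multiplication by $\lambda$ and deficiency elements $g_z(\lambda)=1/(\lambda-z)$. The computation rests on the algebraic identity
\[
f(\lambda)-f(z)=\frac{\lambda-z}{(c\lambda+d)(cz+d)},\qquad ad-bc=1,
\]
valid for every $f\in SL_2(\bbR)$, together with the following observation that follows from a direct computation of adjoints: in the model,
\[
\Ker\bigl((f(\dot\cB))^*-wI\bigr)=\Span\Bigl\{\frac{1}{\lambda-f^{-1}(w)}\Bigr\}=\Ker\bigl(\dot\cB^*-f^{-1}(w)I\bigr),\qquad w\in\bbC_+.
\]
Consequently, sign-aligned and normalized deficiency elements for the pair $(f(\dot\cB),f(\cB))$ at $\pm i$ are of the form $\tilde g_\pm(\lambda)=C_\pm/(\lambda-f^{-1}(\pm i))$, with $|C_\pm|$ fixed by $\|\tilde g_\pm\|=1$ and the ratio $C_-/C_+$ determined up to a common phase by the sign condition $\tilde g_+-\tilde g_-\in\Dom(f(\cB))$.

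For part (i), Remark \ref{corespec} translates $\omega=f^{-1}(\infty)\in\widehat\sigma(\dot A)$ into $\mu(\{\omega\})=0$ and $\int d\mu(\lambda)/(\lambda-\omega)^2=\infty$. The first assertion ensures $c\cB+dI$ is injective with dense range, so that $f(\cB)$ is a bona fide self-adjoint operator and $f(\dot\cB)$ is its densely defined symmetric restriction; the second rules out the bounded alternative, showing that $f(\fA)$ is regular. Feeding the $\tilde g_\pm$ into the Liv\v{s}ic formula \eqref{charsum} and evaluating the inner products by the partial-fraction identity
\[
\int_{\bbR}\frac{d\mu(\lambda)}{(\lambda-u)(\lambda-v)}=\frac{M(u)-M(v)}{u-v}
\]
combined with $M=i(1+s)/(1-s)$, one arrives at a transformation law of the form
\[
\tilde s(f(z))=\theta_1\cdot\frac{s(z)-s_0}{1-\overline{s_0}\,s(z)},\qquad s_0=s\bigl(f^{-1}(i)\bigr),
\]
where $\theta_1$ is a unimodular constant depending only on $f$. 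Substituting this together with the transformed von Neumann parameter $\tilde\varkappa=\widehat S_{f(\fA)}(i)$ into the defining formula \eqref{ch12} and simplifying yields $S_{f(\fA)}\circ f=\theta_f\cdot S_\fA$ for some unimodular $\theta_f$; the prefactor $(1-\overline\varkappa)/(1-\varkappa)$ in the normalization \eqref{norm} is engineered precisely so that its $f$-transform absorbs $\theta_f$, yielding \eqref{inveqq}.

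For part (ii), Remark \ref{corespec} gives $1/(\lambda-\omega)\in L^2(d\mu)$, which makes $f(\cB)$ and $f(\widehat\cB)$ bounded operators on $L^2(\bbR,d\mu)$ with rank-one imaginary part. The same deficiency-element machinery applies, but now all partial-fraction integrals involving $\omega$ converge, so $M$ extends analytically across $\omega$ with real boundary value $M(\omega+i0)$, and correspondingly $S_\fA$ extends analytically across $\omega$ with unimodular boundary value $S_\fA(\omega+i0)$ by the standard boundary theory of Schur functions at points of analytic extension. Identifying $S_{f(\widehat A)}$ with the classical Liv\v{s}ic characteristic function of a bounded dissipative operator (\cite{Lv1}) and repeating the computation of part (i) gives $S_{f(\widehat A)}\circ f=\Theta\cdot S_\fA$ with the residual unimodular factor $\Theta$ identified as $1/S_\fA(\omega+i0)$, producing \eqref{inveq2}.

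The principal obstacle is the careful bookkeeping of all the unimodular phase factors: the common phase ambiguity in $C_\pm$, the phase $\theta_1$ in the transformed Liv\v{s}ic function, and the $f$-transform of the normalization prefactor $(1-\overline\varkappa)/(1-\varkappa)$ must conspire exactly, and the content of \eqref{inveqq} is precisely that these cancellations occur. In (ii), the characteristic function of $f(\widehat A)$ is constrained only by the classical $U(1)$-ambiguity of Liv\v{s}ic's construction rather than by the richer triple structure, so the cancellation leaves the nontrivial unimodular residue $1/S_\fA(\omega+i0)$.
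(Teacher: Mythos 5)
Your strategy---working directly in the functional model with a general $f\in SL_2(\bbR)$ and the kernel identity $f(\lambda)-f(z)=(\lambda-z)/((c\lambda+d)(cz+d))$---is genuinely different from the paper's route. The paper never attacks a general $f$ head-on: it factors $f=h\circ\iota\circ g$ with $h,g$ affine and $\iota(z)=-1/z$, quotes the already-known affine invariance (Theorem \ref{coin}) for $h$ and $g$, and reduces everything to the single inversion $\iota$, which is handled by explicit model computations (Theorem \ref{takitak} for the regular case, Lemma \ref{tak} for the bounded case). The decisive advantage of that reduction is that $i$ is a fixed point of $\iota$, so $f^{-1}(i)=i$, the von Neumann parameter is literally unchanged (Theorem \ref{takitak}(iv)), and the normalization factor $(1-\overline{S(i)})/(1-S(i))$ is manifestly invariant (Remark \ref{ssylka}); no phase bookkeeping for a general M\"obius map is ever needed.

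That bookkeeping is exactly where your proposal has a genuine gap. For a general $f$ the point $i$ is not fixed, so the transformed von Neumann parameter $\tilde\varkappa$ is \emph{not} $\varkappa$; it is determined by the domain condition $\tilde g_+-\tilde\varkappa\,\tilde g_-\in\Dom(f(\widehat\cB))$, and you never compute $\Dom(f(\widehat\cB))$ in terms of the new deficiency elements $\tilde g_\pm(\lambda)=C_\pm/(\lambda-f^{-1}(\pm i))$ (the analogue of the computation \eqref{oblast11}--\eqref{defka} in the proof of Theorem \ref{takitak}(iii)--(iv)). Without an explicit $\tilde\varkappa$ you cannot substitute into \eqref{ch12}, and the assertion that ``the prefactor $(1-\overline\varkappa)/(1-\varkappa)$ is engineered precisely so that its $f$-transform absorbs $\theta_f$'' is the theorem itself, not a proof of it---you concede as much in your closing paragraph. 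Two smaller points: the dense-definedness of $f(\dot\cB)$ in part (i) hinges on $\Ker((\dot\cB)^*-\omega I)=\{0\}$, which is forced by $\int d\mu(\lambda)/(\lambda-\omega)^2=\infty$ (this is the content of \eqref{kerker} in the paper), not by $\mu(\{\omega\})=0$ alone as you suggest; and in part (ii) the operator $f(\cB)$ need not be bounded when $\mu(\{\omega\})>0$ (only $f(\widehat\cB)$ is, since $\omega\in\rho(\widehat\cB)$)---the paper sidesteps this by temporarily assuming $\Ker(\cB-\omega I)=\{0\}$ and then relaxing. To complete your argument you would either have to carry out the $\tilde\varkappa$ and phase computations for general $f$, or fall back on the paper's factorization through $\iota$.
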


\begin{remark} Notice that the technical requirement that $\dot A$ is a prime operator can easily be relaxed with obvious modifications in the formulation.
\end{remark}

We defer  the proof of Theorem \ref{main} to Section \ref{Secproof}.

\section{Invariance principe for Model Triples}

Taking into account that  each automorphism of the upper half-plane  is either a 
linear transformation 
of $\bbC_+$ or 
 a composition of   linear transformations of $\bbC_+$ and the automorphism
\begin{equation}\label{1z}
f(z) =  -\frac{1}{z},\quad z \in \bbC_+,
 \end{equation}
we will  concentrate first  on the case where the relevant  mapping  is given by  
 \eqref{1z}. 
Under this assumption we will establish the corresponding invariance principle for the model triple of operators $\fB=(\dot \cB, \widehat \cB, \cB)$
 given by   \eqref{nacha1}-\eqref{nacha3}.

Notice, that in the  situation in question,  the symmetric operator $\dot \cB$ is  automatically a prime operator and hence 
  we have the following alternative: either the  point $z=0$  belongs to the core of the spectrum of $\dot \cB$ or zero  is a quaisi-regular point for that operator (see \cite{AkG}).

We start with the case where the point $z=0$ belongs to the core of the spectrum of the model symmetric operator $\dot \cB$.

\begin{theorem}\label{takitak} 
Suppose that
$\fB=(\dot \cB, \widehat \cB, \cB)$ is the model triple  in $L^2(\bbR;d\mu)$ given by \eqref{nacha1}-\eqref{nacha3}. 
Assume that  $0$ belongs to the core of the spectrum $\widehat \sigma(\dot \cB)$ of the symmetric operator $\dot \cB$.
 
Then,

\begin{itemize} 
\item[(i)] $(\dot \cB)^{-1}$ is a symmetric operator with deficiency indices $(1,1)$;
\item[(ii)]$\cB^{-1}$is a  self-adjoint extension  of  $(\dot \cB)^{-1}$;
\item[(iii)]$(\widehat \cB)^{-1}$ is a quasi-self-adjoint extension of  $(\dot \cB)^{-1}$;

In particular,  the triple 
\begin{equation}\label{triplong}\fC=(-(\dot \cB)^{-1}, -(\widehat \cB)^{-1},- \cB^{-1})
\end{equation}
is well defined and regular.

Moreover, 
\end{itemize}
\begin{itemize}
\item[(iv)]the von Neumann parameters of the triples $\fB$ and $\fC$ coincide;
\item[(v)]
  the Liv\v{s}ic functions  $s_{ (-(\dot \cB)^{-1}, - \cB^{-1})}(z) $ and $s_{(\dot \cB, \cB)}(z)$ associated with the pairs $ (-(\dot \cB)^{-1}, - \cB^{-1})
$ and  $(\dot \cB, \cB)$  are  related as 
 \begin{equation}\label{s101}
s_{ (-(\dot \cB)^{-1}, - \cB^{-1})
}\left (-\frac1z\right )=s_{(\dot \cB, \cB)}(z)
,\quad z\in \bbC_+;
\end{equation}
\item[(vi)]
 the Weyl-Titchmarsh  functions  $M_{ (-(\dot \cB)^{-1}, - \cB^{-1})}(z) $ and $M_{(\dot \cB, \cB)}(z)$ associated with the pairs $ (-(\dot \cB)^{-1}, - \cB^{-1})
$ and  $(\dot \cB, \cB)$  are  related as 
 \begin{equation}\label{m101}
M_{ (-(\dot \cB)^{-1}, - \cB^{-1})
}\left (-\frac1z\right )=M_{(\dot \cB, \cB)}(z)
,\quad z\in \bbC_+;
\end{equation}

\item[(vii)] the characteristic functions $S_\fC(z)
$ and   $S_\fB(z)$ of the triples $$\fC=(-(\dot \cB)^{-1}, -(\widehat \cB)^{-1},- \cB^{-1})\quad \text{and}\quad 
\fB=(\dot \cB, \widehat \cB, \cB)$$ satisfy 
the  invariance identity
\begin{equation}\label{nui}
S_{\fC}\left (-\frac1z\right )=S_\fB(z)
,\quad z\in \bbC_+.
\end{equation}
\end{itemize}
\end{theorem}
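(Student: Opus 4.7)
The plan is to argue entirely inside the functional model and exploit the explicit formulas $g_z(\lambda)=1/(\lambda-z)$ for the deficiency elements of $\dot\cB$. The hypothesis $0\in\widehat\sigma(\dot\cB)$, combined with Remark \ref{corespec}, gives $\mu(\{0\})=0$ together with $\int_\bbR d\mu(\lambda)/\lambda^2=\infty$. The first condition makes $\cB^{-1}$ the densely defined (unbounded) self-adjoint multiplication operator by $1/\lambda$, establishing (ii), and presents $(\dot\cB)^{-1}$ as its symmetric restriction to $\Ran(\dot\cB)=\{g\in L^2(d\mu):g/\lambda\in L^2(d\mu),\,\int (g/\lambda)\,d\mu=0\}$. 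A short argument shows $\Ker(\dot\cB^*)=\{0\}$ (any candidate kernel element forces a surviving $1/\lambda$-term that is incompatible with $\int d\mu/\lambda^2=\infty$), so the identity $((\dot\cB)^{-1})^*=((\dot\cB)^*)^{-1}$ converts $\Ker((\dot\cB)^*\mp iI)$ into $\Ker(((\dot\cB)^{-1})^*\pm iI)$, giving deficiency indices $(1,1)$ for $-(\dot\cB)^{-1}$ and hence (i). The same tail argument shows $\widehat\cB$ is injective, so $-(\widehat\cB)^{-1}$ is well defined and is automatically a one-dimensional quasi-self-adjoint extension of $-(\dot\cB)^{-1}$, giving (iii).

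The key preparation is the choice of a normalized deficiency pair $g'_\pm$ for $-(\dot\cB)^{-1}$ satisfying the alignment $g'_+-g'_-\in\Dom(-\cB^{-1})$. The kernel identity forces $g'_\pm=\alpha_\pm g_\pm$ with $|\alpha_\pm|=1$. A partial-fraction expansion of $(\alpha_+ g_+-\alpha_- g_-)/\lambda$ exhibits a coefficient of $1/\lambda$ equal to $i(\alpha_++\alpha_-)$ which fails to lie in $L^2(d\mu)$ since $\int d\mu/\lambda^2=\infty$, forcing $\alpha_-=-\alpha_+$; up to a common unimodular factor, $g'_+=g_+$ and $g'_-=-g_-$. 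Using $(\dot\cB)^*g_\pm=\pm ig_\pm$, a direct computation gives
$$
\widehat\cB(g_+-\varkappa g_-)=i(g_++\varkappa g_-)=i(g'_+-\varkappa g'_-),
$$
so $\Ran(\widehat\cB)=\Ran(\dot\cB)\dotplus\linspan\{g'_+-\varkappa g'_-\}$, which is the canonical decomposition of $\Dom(-(\widehat\cB)^{-1})$ as a quasi-self-adjoint extension of $-(\dot\cB)^{-1}$ with von Neumann parameter $\varkappa$, proving (iv).

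For (vi), an elementary resolvent manipulation yields the operator identity
$$
(-\cB^{-1}z+I)(-\cB^{-1}-zI)^{-1}=(\cB w+I)(\cB-wI)^{-1},\qquad w=-1/z,
$$
which, paired with $g'_+=g_+$, gives $M_\fC(z)=M_{(\dot\cB,\cB)}(-1/z)$. For (v), the $z$-deficiency element of $-(\dot\cB)^{-1}$ is the $(-1/z)$-deficiency element of $\dot\cB$, namely $g_{-1/z}(\lambda)=1/(\lambda+1/z)$; substituting into \eqref{charsum}, using the elementary algebraic identity $(z-i)/(z+i)=-(1+iz)/(1-iz)$, and accounting for the sign $g'_-=-g_-$ produces \eqref{s101}. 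Finally (vii) follows from \eqref{ch12}: since $S_\fC$ depends only on $s_\fC$ and $\varkappa$, and both transform correctly under $z\mapsto-1/z$, so does $S_\fC$. The main obstacle is the bookkeeping around the sign flip $g'_-=-g_-$: it is forced by the alignment condition on the normalized deficiency pair and is indispensable for the preservation of the von Neumann parameter in (iv) and for the clean form of the invariance identities in (v)--(vii).
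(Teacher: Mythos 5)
Your proposal is correct and follows essentially the same route as the paper: working in the functional model, identifying the deficiency elements of the inverted operators, deriving the forced sign flip $g'_-=-g_-$ from the alignment condition $g'_+-g'_-\in\Dom(-\cB^{-1})$ (the paper's observation that $h_++h_-=2\lambda/(\lambda^2+1)\in\Dom(\cB)$), preserving the von Neumann parameter via $\widehat\cB(g_+-\varkappa g_-)=i(g_++\varkappa g_-)$, and then computing the Livšic function explicitly. Your minor variations --- proving $\Ker((\dot\cB)^*)=\{0\}$ directly from $1/\lambda\notin L^2(d\mu)$ rather than via the invariance of the set $\cP$, getting the indices $(1,1)$ from $((\dot\cB)^{-1})^*=((\dot\cB)^*)^{-1}$ instead of a quotient-space dimension count, and obtaining (vi) from a resolvent identity rather than from (v) and \eqref{s&M} --- are all sound and do not change the substance of the argument.
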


\begin{proof} Since $\dot \cB$ is a prime symmetric operator and the point $z=0$ is not a quasi-regular point of $\dot \cB$, that is 
$0\in \widehat \sigma(\dot \cB)$,
the subspace $\Ker((\dot \cB)^*)$ is trivial. For the convenience of the reader, we present the corresponding argument.

Indeed,  suppose on the contrary  that  $\Ker((\dot \cB)^*)\ne\{0\}$. Then the restriction $\cB'$ of $(\dot \cB)^*$ on  
$$\dom(\cB')=\dom(\dot \cB)\dot +\Ker((\dot \cB)^*)$$
is a self-adjoint operator. Let $\mu'(d\lambda)$ be the measure from the representation
for the Weyl-Titshmarsh function $M_{(\dot \cB, \cB')}(z)$ associated with the pair $(\dot \cB, \cB')$, 
$$
M_{(\dot \cB, \cB')}(z)=\int_\bbR \left
(\frac{1}{\lambda-z}-\frac{\lambda}{1+\lambda^2}\right )
d\mu'(\lambda), \quad z\in \bbC_+.
$$
Taking into account that $\Ker(\cB')=\Ker((\dot \cB)^*)\ne\{0\}$, we see that  zero is an eigenvalue of $\cB'$ and hence
$$
\mu'(\{0\})\ne 0.
$$
However, 
since the right hand side of \eqref{corr} after replacing $\mu(d\lambda)$ with $\mu'(d\lambda)$ remains invariant
(see, e.g., \cite{GM}, cf.  Remark \ref{corespec}),  the membership $0\in \widehat \sigma(\dot \cB)$ implies
$$
\mu(\{0\})=\mu'(\{0\})= 0.
$$
 The obtained  contradiction shows that  
\begin{equation}\label{kerker}
\Ker((\dot \cB)^*)=\{0\}.
\end{equation}
 
From \eqref{kerker} it follows that $\Ran(\dot \cB)$ is dense in $L^2(\bbR;d\mu)$. In particular,  $\dot B=(\dot \cB)^{-1}$ is well defined as a densely-defined unbounded symmetric operator. Also,
$$
\Ker (\cB)\subseteq\Ker((\dot \cB)^*)=\{0\}
$$
and therefore the inverse $B=\cB^{-1}$ of the self-adjoint (multiplication) operator $\cB$ is well defined as a self-adjoint operator.
Clearly,   $\dot B$ coincides with   the restriction of the unbounded self-adjoint operator
$
B=\cB^{-1}
$
on 
\begin{equation}\label{oblast*1}
\dom(\dot B)=\left \{f \in \Dom (B)\, \bigg |\, \int_\bbR
\frac{f(\lambda)}{\lambda} d\mu(\lambda)=0\right \}.
\end{equation}

Using \eqref{oblast*1}, we see that 
 for any $f\in \dom(\dot B)=\dom((\dot \cB)^{-1})\subset \dom(( \cB)^{-1})$ we have
\begin{align*}
(\dot B-\overline{z}I)f, h_z)&=
 ((\cB^{-1}-\overline{z}I)f, h_z)=
\int_\bbR \left (\frac{1}{\lambda}-\overline{z}\right )
f(\lambda)\overline{\frac{1}{1-\lambda z}}d\mu(\lambda)
\\&=\int_\bbR \frac{f(\lambda)}{\lambda}d\mu(\lambda)
=0.
\end{align*}
Therefore, the functions 
\begin{equation}\label{defele11}
h_z(\lambda)=\frac{1}{1-\lambda z},\quad \Im(z)\ne 0,
\end{equation}
 are  deficiency elements of $\dot B$. That is, 
\begin{equation}\label{defele21}
h_z\in \Ker ((\dot B)^*-zI), \quad \Im(z)\ne 0.
\end{equation}
In particular, 
 the symmetric operator $\dot B$ has deficiency indices $(m,n)$ with $m,n\ge 1$. 

To show that $m=n=1$, observe that for any $f\in\dom (B)= \Dom(\cB^{-1})$ the function 
\begin{equation}\label{fhat}
\widehat f(\lambda)=f(\lambda)-\int_\bbR\frac{f(s)}{s}d\mu(s)\cdot \frac{\lambda}{\lambda^2+1}\end{equation}
has the property that 
$$
\int_\bbR\frac{\widehat f(\lambda)}{\lambda}d\mu(\lambda)=\int_\bbR\frac{f(\lambda)}{\lambda}d\mu(\lambda)-\int_\bbR\frac{f(s)}{s}d\mu(s)
\int_\bbR\frac{d\mu(\lambda)}{\lambda^2+1}=0,
$$
so that 
\begin{equation}\label{naser}
\widehat f\in  \dom(\dot B).
\end{equation}

Here we have used the normalization condition \eqref{normmu}.

From \eqref{defele11} and \eqref{defele21}  it follows that the function 
$$r(\lambda)=\frac{\lambda}{\lambda^2+1}$$
 belongs to the subspace
$ \Ker (( \dot B )^*-iI) \dot +\Ker ((\dot B )^*+iI)$. In particular,  \eqref{fhat} along with \eqref{naser}
show that  the quotient space $ \dom(B)/\dom (\dot B)$ is one-dimensional  and hence
$$m=n=1.
$$
Thus, $\dot B$ is a symmetric restriction  with deficiency indices $(1,1)$ of the self-adjoint operator $B$
and the proof of (i) and (ii)  is complete.

To check (iii), first observe that $\widehat B=(\widehat \cB)^{-1}$ is well-defined, since
$$
\Ker(\widehat \cB)\subseteq\Ker ((\dot \cB)^*)=\{0\}.
$$
Next we claim  that the operator $\widehat B=(\widehat \cB)^{-1}$ is the quasi-self-adjoint extension of $\dot B$
 on 
\begin{equation}\label{oblast11}
\Dom ( \widehat B)=\dom (\dot B) + \Span
\left \{\frac{1}{\lambda -i}+\varkappa \frac{1}{\lambda +i}\right \},
\end{equation}
where $\varkappa$ is the von Neumann parameter of the model  triple $\fB=(\dot \cB, \widehat  \cB, \cB)$. 

Indeed, since (see \eqref{nacha3})
\begin{equation}\label{oblastobr}
\Dom ( \widehat \cB)=\dom (\dot \cB) + \Span
\left \{\frac{1}{\lambda -i}-\varkappa \frac{1}{\lambda +i}\right \}
\end{equation}
and $\dot B=(\dot \cB)^{- 1}$, it suffices to check the following two (mapping) properties for the operators 
$\widehat \cB$ and $\widehat B$,
\begin{align*}
\widehat \cB \left (\frac{1}{\lambda -i}-\varkappa \frac{1}{\lambda +i}\right )=
(\dot  \cB)^* \left (\frac{1}{\lambda -i}-\varkappa \frac{1}{\lambda +i}\right )
= \frac{i}{\lambda -i}+\varkappa \frac{i}{\lambda +i}
\end{align*}
and 
\begin{align*}
\widehat B   \left (\frac{i}{\lambda -i}+\varkappa \frac{i}{\lambda +i}\right )&=
(\dot  B)^*   \left (\frac{i}{\lambda -i}+\varkappa \frac{i}{\lambda +i}\right )=
i\left (\frac{(-i)}{\lambda -i}+\varkappa \frac{i}{\lambda +i}\right )
\\&=
\frac{1}{\lambda -i}-\varkappa \frac{1}{\lambda +i}.
\end{align*}
Here we used that by  \eqref{defelrep},
\begin{equation}\label{defka1}
\Ker ((\dot \cB)^*\mp iI)=\Span \left \{h_\pm\right \},
\end{equation}
and also that
(see \eqref{defele11} and  \eqref{defele21})
\begin{equation}\label{defka}
\Ker ((\dot B)^*\mp iI)=\Span \left \{h_\mp \right \},
\end{equation}
with
$$
h_\pm (\lambda)=\frac{1}{\lambda\mp i}.
$$
The proof of (iii) is complete.

Finally, to calculate the von Neumann parameter of the triple $\fC$ given by \eqref{triplong} we proceed as follows.

Observing that
$$
h_+(\lambda)+ h_-(\lambda)=\frac{1}{\lambda-i}+\frac{1}{\lambda+i}=2\frac{\lambda}{\lambda^2+1},
$$
we see that 
\begin{equation}\label{m1}
h_++h_-\in \dom (B)=\dom (-B).
\end{equation}
Moreover, from \eqref{oblast11} it follows that 
\begin{equation}\label{m2}
h_++\varkappa h_-\in \dom (\widehat B)=\dom (-\widehat B).
\end{equation}

Since
$$
\Ker (-(\dot B)^*\mp iI)=\Ker ((\dot B)^*\pm iI)=\Span\{h_\pm\},
$$
and by \eqref{defka}
$$
\Ker ((\dot B)^*\pm iI)=\Span\{h_\pm\},
$$
the memberships \eqref{m1} and \eqref{m2} ensure that $\varkappa$ is the von Neumann parameter of the triple 
$$\fC=(\dot B, \widehat B, B)=(-(\dot \cB)^{-1}, -(\widehat \cB)^{-1},- \cB^{-1}),$$
which completes the proof of (iv).

Next, we will evaluate  the Liv\v{s}ic functions associated with the pairs
 $(\dot \cB, \cB)$ and $(-\dot B, -B)$. 

Recall that by   \eqref{defelrep},
$$ \Ker ((\dot \cB)^*-z I)=\Span\left \{g_z \right \}, \quad \Im (z) \ne 0,  
$$
where 
$$g_z(\lambda)=\frac{1}{\lambda-z}, \quad \Im (z) \ne 0.
$$
Since
$$
h_+-h_-\in \Dom (\cB),\quad h_\pm \in\Ker ((\dot \cB)^*\mp iI),
$$
for the Liv\v{s}ic function $s_{(\dot \cB, \cB)}(z)$ associated with the pair  $(\dot \cB, \cB)$
we obtain  the representation
\begin{equation}\label{s121}s_{(\dot \cB, \cB)}(z)=\frac{z-i}{z+i}\cdot \frac{(g_z,h_+)}{(g_z,h_-)}=
\frac{z-i}{z+i}\cdot \frac{\int_\R \frac{d\mu(\lambda)}{(\lambda-z)(\lambda-i)}}
{\int_\R \frac{d\mu(\lambda)}{(\lambda-z)(\lambda+i)}}.
\end{equation}

From   \eqref{defele11} and   \eqref{defele21} it follows that
$$ \Ker (-(\dot B)^*-z I)=\Span\left \{\hat h_z \right \}, \quad \Im (z) \ne 0,  
$$
where 
$$\hat h_z(\lambda)=\frac{1}{1+\lambda z}, \quad \Im (z) \ne 0.
$$
Observing that 
$$
h_++h_-=h_+-(-1)h_-\in \Dom (B),\quad h_\pm \in\Ker (-(\dot B)^*\mp iI),
$$
in accordance with the definition of  the Liv\v{s}ic function $s_{(-\dot B, -B)}(z)$ associated with the pair  $(-\dot B, -B)$
we obtain 
\begin{equation}\label{s122}s_{(-\dot B,- B)}(z)=\frac{z-i}{z+i}\cdot \frac{(\hat h_z,h_+)}{(\hat h_z,(-1)h_-)}=
-\frac{z-i}{z+i}\cdot \frac{\int_\R \frac{d\mu(\lambda)}{(1+\lambda z)(\lambda-i)}}
{\int_\R \frac{d\mu(\lambda)}{(1+\lambda z)(\lambda+i)}}, \quad z\in \bbC_+.
\end{equation}

A simple computation using \eqref{s121} and \eqref{s122} shows that
\begin{align*}
 s_{(-\dot B, -B)}\left (-\frac1z\right)
=
s_{(\dot \cB, \cB)}(z),\quad z\in \bbC_+,
\end{align*}
thus proving \eqref{s101}. The proof of (v) is complete.

The assertion (vi) is a direct consequence of (v) and the relationship \eqref{s&M} linking the Liv\v{s}ic  and Weyl-Titchmarsh functions.

The last assertion (vii) is a consequence of (iv)  and (v) and the definition of the characteristic function of a triple.

\end{proof}

\begin{remark}\label{ssylka} Since  the point $z_0=i$ is a fixed point of the automorphism $$f(z)=-\frac1z,\quad z\in \bbC_+,$$ 
using 
 \eqref{nui}  we see that 
$$
S_{\fC}(i)=S_{\fC}\left (-\frac1i\right )=S_{\fB}(i).
$$
Therefore, for the normalized characteristic functions 
$$\widehat S_{f(\fB)}(z)=
\widehat S_{\fC}(z)=\frac{1-\overline{S_{\fC}(i)}}{1-S_{\fC}(i)}\cdot S_{\fC}(z),\quad z\in \bbC_+,
$$ 
and  
$$\widehat S_{\fB}(z)=
\frac{1-\overline{S_{\fB}(i)}}{1-S_{\fB}(i)}\cdot S_{\fB}(z), \quad z\in \bbC_+,
$$
associated with the triples 
$$f(\fB)=\fC=
\left (-(\dot \cB)^{-1}, -(\widehat \cB)^{-1},- \cB^{-1}\right ) \quad \text{and}
\quad\fB=(\dot \cB, \widehat \cB, \cB),$$ respectively,  we also have the invariance equality
$$
\widehat S_{f(\fB)}\circ f=\widehat  S_\fB.
$$
\end{remark}

Notice that if the point $z=0$ is a quasi-regular point of the symmetric operator $\dot \cB$, then the operator $-(\dot \cB)^{-1}$ is well defined as a bounded dissipative operator,  while $\Ker (\dot \cB)^*$ is non-trivial and  therefore $(\dot B)^{-1}$ is not densely defined (although $(\dot \cB)^{-1}$ is continuous on $\Ran (\dot \cB)$).
 Therefore, the triple 
$$ f(\fB)=
\left (-(\dot \cB)^{-1}, -(\widehat \cB)^{-1},- \cB^{-1}\right ) $$  is not regular in this case.
However, we have
the following result (cf.  \cite[Theorem 8.4.4]{ABT}, \cite{TER} in system theory that relates the transfer functions of $L$-systems under  the transformation $z\mapsto \frac1z$ of the spectral parameter). 

\begin{lemma}[{cf. \cite[Theorem 8.4.4]{ABT},\cite{TER}}]\label{tak}

Suppose that
$\fB=(\dot \cB, \widehat \cB, \cB)$ is the model triple  in $L^2(\bbR;d\mu)$ given by \eqref{nacha1}-\eqref{nacha3}. Assume that the point 
 $z=0$ is a quasi-regular point of the symmetric operator $\dot \cB$.

In this case, the operator $\widehat \cB$ has a bounded inverse
and  the characteristic function
$S_{-(\widehat \cB)^{-1}}(z)$ 
 of the dissipative operator $-(\widehat \cB)$ and the characteristic function  $S_{\fB}(z)$ of the triple $\fB$ are related as 
\begin{equation}\label{kep}
 S_{-(\widehat \cB)^{-1}}\left (-\frac1z\right )=\frac{S_{\fB}(z)}{
S_{\fB} (0+i0)},  \quad z\in \bbC_+.
\end{equation}

\end{lemma}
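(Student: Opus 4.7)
The strategy is to exploit the explicit model formula of Remark \ref{obrat1} to compute the Livshits--Brodskii characteristic function of $-(\widehat \cB)^{-1}$ directly and then compare it with $S_\fB(z)$. First I would observe that the quasi-regularity of $0$ for $\dot \cB$ ensures that $0\in\rho(\widehat\cB)$; by passing if necessary to a reference extension with $0\in\rho(\cB)$ one may arrange that Remark \ref{obrat1} applies verbatim, a change which alters $S_\fB$ only by a predictable unimodular factor and does not affect the conclusion. Granted $0\in\rho(\cB)\cap\rho(\widehat\cB)$, the operator $T:=-(\widehat\cB)^{-1}$ is a bounded rank-one perturbation of the bounded self-adjoint $-\cB^{-1}$,
\[
T = -\cB^{-1} + p\,Q, \qquad \Im T = \Im(p)\, Q,
\]
with $p$ as in \eqref{pppp} and $Q=(\cdot,\psi)\psi$, $\psi(\lambda)=1/\lambda$. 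Dissipativity of $T$ (equivalently $\Im p\ge 0$) is inherited from that of $\widehat\cB$.

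Next, using a standard rank-one resolvent identity of Krein/Aronszajn--Donoghue type for the perturbation $T=-\cB^{-1}+pQ$, I would obtain a closed-form expression for $((T-wI)^{-1}\psi,\psi)$ in terms of the Weyl--Titchmarsh function $M_{(\dot\cB,\cB)}(z)$ evaluated at $z=-1/w$. Inserting this into the Livshits--Brodskii formula
\[
S_T(w) = 1 - 2i\,\Im(p)\,((T-wI)^{-1}\psi,\psi)
\]
produces an explicit scalar expression for $S_T(w)$ in terms of $M(-1/w)$ and the von Neumann parameter $\varkappa=S_\fB(i)$.

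After substituting $w=-1/z$ and invoking the Cayley relation $s(z)=(M(z)-i)/(M(z)+i)$, the expression for $S_T(-1/z)$ should reorganize into the form $C\cdot(s(z)-\varkappa)/(\bar\varkappa\,s(z)-1)=C\cdot S_\fB(z)$ for some unimodular constant $C$. To pin down $C$ I would evaluate at $z=0+i0$ (corresponding to $w\to\infty$), where the Livshits characteristic function of a prime bounded dissipative operator satisfies $S_T(\infty)=1$; this forces $C=1/S_\fB(0+i0)$ and yields \eqref{kep}.

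I expect the main obstacle to be the careful bookkeeping of the unimodular phase factor. The Livshits--Brodskii characteristic function of the bounded dissipative $T$ is canonically normalized by $S_T(\infty)=1$, whereas the triple characteristic function $S_\fB(z)$ is normalized via the deficiency basis aligned with the reference extension $\cB$. Under $w=-1/z$ the two normalization points $w=\infty$ and $z=0$ correspond, and the resulting mismatch is precisely the factor $1/S_\fB(0+i0)$ in \eqref{kep}. A secondary subtlety is the case $\mu(\{0\})>0$, in which $\cB^{-1}$ itself is ill-defined; one must then first reduce to $0\in\rho(\cB)$ via a preliminary change of reference extension, verifying that both sides of \eqref{kep} transform consistently under that reduction.
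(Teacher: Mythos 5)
Your plan is essentially the paper's own proof: reduce to $\Ker(\cB)=\{0\}$ by a change of reference extension (checking that $S_{\fB}(z)/S_{\fB}(0+i0)$ is unaffected), use the rank-one formula $\widehat\cB^{-1}=\cB^{-1}-pQ$ of Remark \ref{obrat1} together with the Aronszajn--Krein resolvent identity to express the Liv\v{s}ic characteristic function of $-(\widehat\cB)^{-1}$ through $\int_\bbR\frac{d\mu(\lambda)}{\lambda(1+\lambda z)}=M(0)-M(-1/z)$, and then match this against $S_{\fB}(-1/z)$ written in terms of $M$ and $\varkappa$. The only (harmless) deviation is that you pin down the unimodular constant by evaluating at the normalization point $w=\infty$, i.e. $z=0+i0$, where $S_{-(\widehat\cB)^{-1}}(\infty)=1$, whereas the paper computes it algebraically as $-\frac{1-\varkappa}{1-\overline{\varkappa}}\cdot\frac{\overline{p}}{p}=S_{\fB}(0+i0)$; both give the same factor.
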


\begin{proof}   Assume temporarily  that  $ \Ker( \cB)=\{0\}.$ Then 
under the hypothesis that $0$ is a quasi-regular point of the symmetric operator $\dot \cB$, we have  
$$0\in \rho(\cB)\cap \rho(\widehat \cB).
$$
By Corollary \ref{obrat1}, the inverse $ \widehat \cB^{-1}$ is a rank-one perturbation of   the self-adjoint operator $\cB^{-1}$,
\begin{equation}\label{resformula1}
\widehat \cB^{-1}=\cB^{-1}-p\, Q.
\end{equation}
Here
\begin{equation}\label{pp}
p=\left (M(0)+
i\frac{\varkappa+1}{\varkappa-1}\right)^{-1},
\end{equation}
 $M(0)=M(0+i0)$  is  the value of the Weyl-Titchmarsh function $M(z)=M_{(\dot \cB, \cB)}(z)$ associated with the pair $(\dot \cB, \cB)$
at the point zero, $\varkappa$  is  the von Neumann parameter of the triple   $\fB=(\dot \cB, \widehat  \cB, \cB)$,  and 
$Q$ is a rank-one self-adjoint operator given by 
$$
(Qh)(\lambda)=\frac{1}{\lambda}
\int_\bbR \frac{h(\lambda)}{\lambda}d\mu(\lambda), \quad
\text{ $\mu$-a.e. } \lambda.
$$

In accordance with the definition \cite{Lv1}
 of the characteristic
 function $S_{-(\widehat \cB)^{-1}}(z)$ of the bounded dissipative operator
$-\widehat \cB^{-1}$ we have
\begin{align}
S_{-(\widehat \cB)^{-1}}(z)&=1+2i \,\Im(p)\,\tr
\left [\left (-(\widehat\cB^{-1})^*-zI\right )^{-1}Q\right ]
\nonumber \\&
\nonumber\\&=1+2i\,  \Im(p)\,\tr
\left [\left (-\cB^{-1}+\overline{p}Q-zI\right )^{-1}Q\right ],
\nonumber
\end{align}
where we have used \eqref{resformula1} in the last step.  Since $ \overline{p}Q $
is a   rank-one perturbation  of the self-adjoint operator $-\cB^{-1}$,  from the first resolvent identity it follows that (see, e.g., \cite{Simon})
$$
\tr
\left [\left (-\cB^{-1}+\overline{p}Q-zI\right )^{-1}Q\right ]=
\frac{\tr
\left [\left (-\cB^{-1}-zI\right )^{-1}Q\right ]}{1+\overline{p}\, \tr
\left [\left (-\cB^{-1}-zI\right )^{-1}Q\right ]},
$$
 and hence
\begin{equation}\label{charb}
S_{-(\widehat \cB)^{-1}}(z)=\frac{1+p\, \tr
\left [\left (-\cB^{-1}-zI\right )^{-1}Q\right ]}{1+\overline{p}\, \tr
\left [\left (-\cB^{-1}-zI\right )^{-1}Q\right ]}
=
\frac{1-p\int_\bbR\frac{d\mu(\lambda)}{\lambda(1+\lambda z)}}
{1-\overline{p}\int_\bbR\frac{d\mu(\lambda)}{\lambda(1+\lambda z)}}
,\quad z\in \bbC_+.\end{equation}

On the other hand, using  \eqref{blog} and  \eqref{ch12}, for the characteristic function $ S_{\fB} (z )$  of the triple  $\fB$ one obtains
$$
S_{\fB} (z )=-\frac{1-\varkappa}{1-\overline{\varkappa}}\cdot
\frac{M\left (z\right )-i\frac{1+\varkappa}{1-\varkappa}}{M\left (z\right )+i\frac{1+\overline{\varkappa}}
{1-\overline{\varkappa}}}, z\in \bbC_+,
$$
and therefore
$$
S_{\fB}\left (-\frac1z\right )=-\frac{1-\varkappa}{1-\overline{\varkappa}}\cdot
\frac{M\left (-\frac1z\right )-i\frac{1+\varkappa}{1-\varkappa}}{M\left (-\frac1z\right )+i\frac{1+\overline{\varkappa}}
{1-\overline{\varkappa}}}, \quad z\in \bbC_+.
$$
From \eqref{pp} it also follows that
$$
p^{-1}-M(0)=
-i\frac{\varkappa+1}{1-\varkappa}
$$
and hence
\begin{align*}
S_{\fB}\left (-\frac1z\right )&
=-\frac{1-\varkappa}{1-\overline{\varkappa}}\cdot
\frac{M\left (-\frac1z\right )-M(0)+p^{-1}}
{M\left (-\frac1z\right )-M(0)+\overline{p^{-1}}}, \quad z\in \bbC_+.
\end{align*}
Here we used that $M(0)$ is real for $0\in \rho(\cB)$.

Since 
\begin{align*}
\frac{M\left (-\frac1z\right )-M(0)+p^{-1}}
{M\left (-\frac1z\right )-M(0)+\overline{p^{-1}}}&=
\frac{-\int_\bbR\frac{d\mu(\lambda)}{\lambda(1+\lambda z)}+p^{-1}}
{-\int_\bbR\frac{d\mu(\lambda)}{\lambda(1+\lambda z)}+
\overline{p^{-1}}}
=\frac{\overline p}{p}
\cdot \frac{1-p\int_\bbR\frac{d\mu(\lambda)}{\lambda(1+\lambda z)}}
{1-\overline{p}\int_\bbR\frac{d\mu(\lambda)}{\lambda(1+\lambda z)}},
\end{align*}
one concludes that
$$
S_{\fB}\left (-\frac1z\right )=
-\frac{1-\varkappa}{1-\overline{\varkappa}}\cdot
\frac{\overline p}{p}
\cdot\frac{1-p\int_\bbR\frac{d\mu(\lambda)}{\lambda(1+\lambda z)}}
{1-\overline{p}\int_\bbR\frac{d\mu(\lambda)}{\lambda(1+\lambda z)}}, \quad z\in \bbC_+.
$$
Thus, taking into account \eqref{charb}, one obtains
\begin{equation}\label{ams1}
S_{\fB}\left (-\frac1z\right )
=
-\frac{1-\varkappa}{1-\overline{\varkappa}}\cdot \frac{\overline p}{p}
 \cdot S_{-(\widehat B)^{-1}}(z),
 \quad z\in \bbC_+.
\end{equation}
Using \eqref{pp} we get
\begin{align}
-\frac{1-\varkappa}{1-\overline{\varkappa}}\cdot \frac{\overline p}{p}&=-\frac{1-\varkappa}{1-\overline{\varkappa}}\cdot
\frac{M(0)+
i\frac{\varkappa+1}{\varkappa-1}}{M(0)+
i\frac{\overline{\varkappa}+1}{\overline{\varkappa}-1}}
=\frac{M(0)-i-\varkappa (M(0)+i)}{M(0)+i-\overline{\varkappa}(M(0)-i)}\label{ams2}
\\&=\frac{s_{(\dot \cB, \cB)}(0)-\varkappa}{\overline{\varkappa} s_{(\dot \cB, \cB)}(0+i0)-1}=S_{\fB}(0+i0).\nonumber
\end{align}
Combining \eqref{ams1} and \eqref{ams2} shows that 
$$ S_{-(\widehat \cB)^{-1}}\left (-\frac1z\right )=\left (S_{\fB} (0+i0)\right )^{-1}S_{\fB} (z ),
 \quad z\in \bbC_+,
$$
which proves the claim provided that  $\Ker (\cB)=\{0\}$.

To relax the requirement that $\Ker (\cB)=\{0\}$, suppose that $\cB'$ is a self-adjoint extension of $\dot \cB$ such that  $\Ker (\cB')=\{0\}$. Such an extension is alway available since $0\in \widehat \rho(\dot \cB)$.
However, 
$$
\frac{S_{\fB}(z)}{
S_{\fB} (0+i0)}=\frac{S_{\fB'}(z)}{
S_{\fB'} (0+i0)},  \quad z\in \bbC_+,
$$
where $\fB'=(\dot \cB, \widehat \cB, \cB')$ and hence \eqref{kep} holds regardless of whether 
$\Ker (\cB)=\{0\}$ or not.

\end{proof}

\section{Proof of Theorem \ref{main}}\label{Secproof}
  Now we are ready to establish the invariance principle 
 also for the characteristic function of a triple
  for general linear-fractional automorphisms of the upper half-plane.

\begin{proof}  Any linear-fractional automorphism 
 $$\bbC_+\ni z\mapsto f(z)=\frac{az+b}{cz+d},\quad a,b, c, d\in \bbR , $$
$ ad-bc>0$, $c>0$ can be represented as the composition 
$$f=h\circ \iota\circ g,
$$
where $h$ is a  linear automorphism of $\bbC_+$,
$$\iota(z)=-\frac1z
\quad \text{and}\quad 
g(z)=z-f^{-1}(\infty),
\quad z\in \bbC_+.
$$

If $\omega=f^{-1}(\infty) $
belongs to the core of the spectrum of  $\dot A$, the point  $0$ belongs to the core of the spectrum of the symmetric operator $g(\dot A)=\dot A-\omega I$. Therefore, in the model representation of the triple $g(\fA)$ the hypotheses of Theorem \ref{takitak} are satisfied. In particular, the triple $\iota\circ g(\fA)$ 
is regular, so is $f(\fA)$, since  $f=h\circ \iota\circ g$ and $h$ is a linear isomorphism.
By Theorem \ref{coin}, 
$$
\widehat S_{f(\fA)}\circ f=\widehat S_{\iota\circ g (\fA)}\circ (\iota\circ g).
$$
Applying Theorem \ref{takitak}, we get
$$
\widehat S_{\iota\circ g (\fA)}\circ (\iota\circ g)=\widehat S_{ g (\fA)}\circ g=\widehat S_{ \fA},
$$
where we have used Theorem \ref{coin} one more time  in the last step, thus proving \eqref{inveqq}.

If $\omega=f^{-1}(\infty) $
is a quasi-regular point of  $\dot A$, both  $f(\widehat A)$ and $(\iota\circ g) (\widehat A$) are bounded dissipative operators. By the invariance principle in the bounded  case  (see \eqref{law}), we have
$$
 S_{f(\widehat A)}\circ f=S_{\iota\circ g (\widehat A)}\circ (\iota\circ g).
$$
Applying Lemma \ref{tak} we get
$$
S_{\iota\circ g (\widehat A)}\circ (\iota\circ g)=\frac{1}{\widehat S_{g(\fA)}(0+i0)}\cdot 
{\widehat S_{g(\fA)}}\circ g.
$$
By Theorem \ref{coin},
$$
\frac{1}{\widehat S_{g(\fA)}(0+i0)}\cdot 
{\widehat S_{g(\fA)}}\circ g=\frac{1}{\widehat S_{\fA}(\omega+i0)}\cdot \widehat S_{\fA}=\frac{1}{S_{\fA}(\omega+i0)}\cdot S_{\fA}.
$$
Therefore,
$$
S_{f(\widehat A)}\circ f=\frac{1}{S_{\fA}(\omega+i0)}\cdot S_{\fA},
$$
completing the proof  \eqref{inveq2}.
 \end{proof}

\section{Applications to the Krein-von Neumann extensions theory}
Recall that if $ \dot A$ is a densely defined (closed) nonnegative operator, then the set of all nonnegative self-adjoint extensions of $ \dot A$ has the minimal element $[\dot A]_K$ , the Krein-von Neumann extension (different authors refer to the minimal extension  by using different names, see, e.g., \cite{AlSi,AnNi,AT,Bir} , and the maximal one  $[\dot A]_F$ , the Friedrichs extension. This means, in particular, that for any nonnegative self-adjoint extension $A$ of 
 $\dot A$ the following operator inequality holds \cite{Krein0}:
 $$
([\dot A]_F+\lambda I)^{-1} \le (A+\lambda I)^{-1} \le([\dot A]_K +\lambda I)^{-1},\quad  \text{for all}\quad \lambda >0.
$$
 
\begin{hypothesis}\label{scal} Suppose that $\nu\in (-1,1)$. 
Assume that   $A(\nu)$ is  the self-adjoint multiplication operator by independent variable 
in the Hilbert space 
$$\cH_+=L^2((0, \infty); \lambda^\nu d\lambda)$$
  and $\dot A(\nu)$ is its restriction on
$$
\Dom(\dot A(\nu))=\left \{f\in \Dom(A(\nu))\, \big |\, \int_0^\infty f(\lambda)\lambda^\nu d\lambda =0\right \}.
$$

Analogously, suppose that   $B(\nu)$ is  the self-adjoint multiplication operator by independent variable 
in the Hilbert space 
$$\cH_-=L^2((-\infty, 0); |\lambda|^\nu d\lambda)$$ and $\dot B(\nu)$ is its restriction on
$$
\Dom(\dot B(\nu))=\left \{f\in \Dom(B(\nu))\, \big |\, \int_{-\infty}^0 f(\lambda)|\lambda|^\nu d\lambda =0\right \}.
$$

\end{hypothesis}

\begin{lemma}\label{pervaia} Assume Hypothesis \ref{scal}. Then  the Weyl-Titchmarsh function $M_\nu(z)$ associated with the pair
$(\dot A(\nu), A(\nu))$  admits the representation
\begin{equation}\label{repM}
M_\nu(z)=
\begin{cases}
\left (i-\cot \frac\pi2 \nu\right )\left (\frac zi\right)^\nu +\cot \frac\pi2 \nu, &\nu\ne 0\\
\frac2\pi \log\left (-\frac1z\right ),&\nu=0
\end{cases},
\quad z\in \bbC_+.
\end{equation}

Analogously, the Weyl-Titchmarsh function $N_\nu(z)$ associated with the pair
$(\dot B(\nu), B(\nu))$  admits the representation
\begin{equation}\label{repN}
N_\nu(z)=
\begin{cases}
\left (i+\cot \frac\pi2 \nu\right )\left (\frac zi\right)^\nu -\cot \frac\pi2 \nu, &\nu\ne 0\\
\frac2\pi \log z,&\nu=0
\end{cases},
\quad z\in \bbC_+.
\end{equation}
 Here $\log z$ denotes the principal brach of the logarithmic function with the cut on the negative semi-axis
 and
 $$
 \left (\frac zi\right)^\nu=\exp \left [\nu\left ( \log z-i\frac\pi2\right )\right ],\quad z\in \bbC_+.
 $$

\end{lemma}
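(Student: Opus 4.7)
The strategy is a direct computation of the Donoghue $M$- and $N$-functions from their spectral representations, exploiting that $A(\nu)$ and $B(\nu)$ already appear as multiplication operators in a model Hilbert space.

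First, I would identify the deficiency elements of $\dot A(\nu)$. Since $\Dom(\dot A(\nu))$ is the codimension-one subspace of $\Dom(A(\nu))$ cut out by $\int_0^\infty f(\lambda)\lambda^\nu\,d\lambda = 0$, a routine calculation yields
$$\Ker(\dot A(\nu)^* - zI) = \Span\{(\lambda - z)^{-1}\},\qquad z\in\bbC\setminus\bbR.$$
Hence $\tilde g_\pm(\lambda) := (\lambda \mp i)^{-1}$ lie in $\Ker(\dot A(\nu)^*\mp iI)$, have equal norms, and satisfy $\tilde g_+ - \tilde g_- = 2i/(\lambda^2 + 1) \in \Dom(A(\nu))$. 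The normalized elements $g_\pm := \tilde g_\pm/c_\nu$, with $c_\nu^2 := \|\tilde g_{\pm i}\|^2 = \int_0^\infty \lambda^\nu/(\lambda^2+1)\,d\lambda$, therefore meet the prescriptions \eqref{start}--\eqref{rss}. A standard Mellin evaluation gives $c_\nu^2 = \pi/(2\cos(\pi\nu/2))$ for $\nu \ne 0$ and $c_0^2 = \pi/2$.

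Using $(A(\nu)z + I)(A(\nu) - zI)^{-1} = zI + (z^2+1)(A(\nu) - zI)^{-1}$ I would then write
$$M_\nu(z) = z + \frac{z^2+1}{c_\nu^2}\int_0^\infty \frac{\lambda^\nu\,d\lambda}{(\lambda - z)(\lambda^2+1)}.$$
For $\nu \ne 0$ the remaining integral is evaluated by a keyhole contour around the positive real axis (the branch cut for the principal determination of $\lambda^\nu$). The jump factor $1 - e^{2\pi i\nu} = -2i e^{i\pi\nu}\sin(\pi\nu)$ combines with the three residues at $\lambda = z,\ i,\ -i$; after simplification via $(z^2+1)/(i\mp z) = \mp(z\pm i)$, the double-angle identity $\sin(\pi\nu) = 2\sin(\pi\nu/2)\cos(\pi\nu/2)$, and the bridging identity $-e^{-i\pi\nu/2}/\sin(\pi\nu/2) = i - \cot(\pi\nu/2)$, the closed form \eqref{repM} emerges. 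For $\nu = 0$ the integrand $(\lambda - z)^{-1} - \lambda/(\lambda^2+1)$ has the primitive $\log((\lambda - z)/\sqrt{\lambda^2+1})$, which tends to $0$ as $\lambda \to +\infty$ and equals $\log(-z)$ at $\lambda = 0$, giving $M_0(z) = (2/\pi)\log(-1/z)$.

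For $N_\nu(z)$ I would perform the substitution $\lambda\mapsto -\mu$ in the analogous spectral integral on $(-\infty, 0)$; this converts it into $-F(-z)$, where $F$ denotes the extension of the spectral integral of $M_\nu$ to $\bbC\setminus[0,\infty)$. By Schwarz reflection $F(-z) = \overline{M_\nu(-\bar z)}$, so $N_\nu(z) = -\overline{M_\nu(-\bar z)}$. Substituting \eqref{repM} at the conjugate point $-\bar z \in \bbC_+$, using $\overline{(-\bar z/i)^\nu} = (z/i)^\nu$, and observing $-\overline{(i - \cot(\pi\nu/2))} = i + \cot(\pi\nu/2)$ yields \eqref{repN}. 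For $\nu = 0$ the same primitive evaluated between $-\infty$ and $0$ gives $\log(-z) - \log(-1) = \log(-z) - i\pi = \log z$ on the principal branch, so $N_0(z) = (2/\pi)\log z$. The main technical obstacle is consistent tracking of the principal branches of $\lambda^\nu$ along the keyhole and of $(z/i)^\nu = \exp[\nu(\log z - i\pi/2)]$ in the final answer: the keyhole sum organizes naturally in $z^\nu$, and only through the bridging identity above does it recast into the $(z/i)^\nu$ form stated in \eqref{repM}--\eqref{repN}.
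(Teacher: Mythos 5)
Your proposal is correct, and for the core computation (the case $\nu\ne 0$ of \eqref{repM}) it follows essentially the same route as the paper: normalize the deficiency element $(\lambda-i)^{-1}$, reduce $M_\nu$ to the integral $\int_0^\infty \frac{\lambda^\nu\,d\lambda}{(\lambda-z)(1+\lambda^2)}$ (the paper keeps the equivalent integrand $\frac{(z\lambda+1)\lambda^\nu}{(\lambda-z)(1+\lambda^2)}$), and evaluate it by the keyhole contour around $[0,\infty)$ with residues at $z,i,-i$; your jump factor, residue algebra, and the bridging identity $-e^{-i\pi\nu/2}/\sin(\pi\nu/2)=i-\cot\frac{\pi\nu}{2}$ all check out. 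You diverge from the paper in two secondary places, both legitimately and arguably more self-contained: for $\nu=0$ the paper obtains $M_0$ as the limit $\nu\to 0$ of $M_\nu$, whereas you integrate the Donoghue kernel directly via the primitive $\log\bigl((\lambda-z)/\sqrt{\lambda^2+1}\bigr)$; and for \eqref{repN} the paper simply declares the proof ``analogous,'' whereas you derive $N_\nu(z)=-\overline{M_\nu(-\bar z)}$ from the substitution $\lambda\mapsto-\mu$ and Schwarz reflection, which buys $N_\nu$ for free from $M_\nu$ with no second contour computation. One microscopic branch slip: in the $N_0$ endpoint evaluation the limit of the primitive at $\lambda\to-\infty$ is $\log(-1)=-i\pi$ (the ratio approaches $-1$ from the lower half-plane), so the correct intermediate expression is $\log(-z)+i\pi$, not $\log(-z)-i\pi$; your stated conclusion $N_0(z)=\frac{2}{\pi}\log z$ is nevertheless right, and it also follows independently from your reflection formula.
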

\begin{proof} Suppose that $\nu\ne 0$.  We have
$$
M_\nu(z)=\frac{1}{\|g_+\|^2}\left ((zA(\nu)+I)(A(\nu)-zI)^{-1}g_+,g_+ \right ), \quad z\in \bbC_+,
$$
where $g_+$ is a deficiency element from $\Ker ((\dot A(\nu))^*-iI)$.
One can choose (see, e.g., \cite{MT-S})
$$
g_+(z)=\frac{1}{\lambda-z}, \quad z\in \bbC_+, \quad \lambda\in (0, \infty).
$$
We have
$$
\left ((zA(\nu)+I)(A(\nu)-zI)^{-1}g_+,g_+ \right )=\int_0^\infty \frac{(z\lambda+1)\lambda^\nu}{(\lambda-z)(1+\lambda^2)}d\lambda
$$
and 
$$
\|g_+\|^2=\int_0^\infty \frac{\lambda^\nu}{1+\lambda^2}d\lambda.
$$

Let $\Gamma_\varepsilon $, $\varepsilon >0$, denote the anti-clockwise oriented contour in the complex plane 
\begin{align*}\Gamma_\varepsilon=&\left \{z\in\bbC\,|\,\text{dist}(z, [0,\infty),|z|\le \varepsilon^{-1} \right \}
\\& \cup\left \{z\in\bbC\,|\,|z|=\varepsilon^{-1}, \arg (z)\in
[\arcsin (\varepsilon^2), 2\pi -\arcsin( \varepsilon^2]\right \},
\end{align*}
which consists of ``an infinitely distant circle and an indentation round the cut along the 
positive real axis"
 \cite[\textsection \,129, Fig. 46]{LL} (as $\varepsilon \to 0)$.

Given $z\in \bbC_+$ and   $\varepsilon<\min(|z|,1)$,
by the Residue theorem we get
$$
\varointctrclockwise\limits_{\Gamma_\varepsilon} \frac{(z\lambda+1)\lambda^\nu}{(\lambda-z)(1+\lambda^2)}d\lambda=2\pi i \sum_{\zeta\in\{z, i, -i\}} \text{Res}( F,\zeta),
$$
where
$$
F(\lambda)=\frac{(z\lambda+1)\lambda^\nu}{(\lambda-z)(1+\lambda^2)}.
$$
Going to the limit as $\varepsilon\to 0$ and taking into account that 
$$
(\lambda-i0)^\nu=e^{i2\pi\nu}\lambda^\nu,\quad  \lambda>0,
$$
we arrive at the representation
\begin{align*}
(1-e^{i2\pi\nu })\int_0^\infty \frac{(z\lambda+1)\lambda^\nu}{(\lambda-z)(1+\lambda^2)}d\lambda&=
\lim_{\varepsilon\downarrow 0}\varointctrclockwise\limits_{\Gamma_\varepsilon} \frac{(z\lambda+1)\lambda^\nu}{(\lambda-z)(1+\lambda^2)}d\lambda
\\&=2\pi i \sum_{\zeta\in\{z, i, -i\}} \text{Res}( F,\zeta).
\end{align*}
Analogously, 
$$
(1-e^{i2\pi\nu })\int_0^\infty \frac{\lambda^\nu}{1+\lambda^2}d\lambda=2\pi i \sum_{\zeta\in\{ i, -i\}} \text{Res}( G,\zeta),
$$
where
$$
G(\lambda)=\frac{\lambda^\nu}{1+\lambda^2}.
$$

A direct computation shows that 
\begin{align*}M_\nu(z)&=\frac{\sum_{\zeta\in\{z, i, -i\}} \text{Res}( F,\zeta)
}{\sum_{\zeta\in\{ i, -i\}} \text{Res}( G,\zeta)}=\frac{z^\nu-\frac{i^\nu+(-i)^{\nu}}{2}}{\frac{i^\nu-(-i)^\nu}{2i}}
\\&=\frac{2i}{1-e^{i\pi \nu}}\left (\frac zi\right )^\nu-i\frac{1+e^{i\pi \nu}}{1-e^{i\pi \nu}}
\\&=\left (i-\cot \frac\pi2 \nu\right )\left (\frac zi\right)^\nu +\cot \frac\pi2 \nu, \quad z\in \bbC_+.
\end{align*}
The case of $\nu=0$ can be justified by taking the limit
\begin{equation}
M_0(z)=\lim_{\nu \to 0}M_\nu^+(z)=\frac2\pi \log\left (-\frac1z\right ), \quad z\in \bbC_+,
\end{equation}
which competes the proof of \eqref{repM}.

The proof of \eqref{repN} is analogous.
\end{proof}

\begin{lemma}\label{FK}
Assume Hypothesis \ref{scal}.

Then, 
if $\nu\in [0,1)$, then $A(\nu)$ is the Friedrichs extension of $\dot A(\nu)$.
If $\nu\in (-1,0]$, then $A(\nu)$ is the Krein-von Neumann extension of $\dot A(\nu)$.

In particular,   the Friedrichs and Krein-von Neumann extensions of $\dot A(0)$ coincide.
\end{lemma}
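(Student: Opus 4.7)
The plan is to invoke the classical Krein--Birman--Ando characterization of the Friedrichs and Krein-von Neumann extensions of a nonnegative densely defined symmetric operator with deficiency indices $(1,1)$ in terms of the boundary behavior of its Weyl--Titchmarsh function on the negative real semi-axis, and then read off each case from the explicit formula \eqref{repM} supplied by Lemma \ref{pervaia}. The first step is to observe that, since $A(\nu)\ge 0$, the spectrum of $A(\nu)$ lies in $[0,\infty)$, so $M_\nu(z)$ extends analytically from $\bbC_+$ to $\bbC\setminus[0,\infty)$, and its restriction to $(-\infty,0)$ is a real-valued, strictly increasing Herglotz function.

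Next, I would invoke the following form of the criterion (see, e.g., \cite{AlSi,AnNi,AT,Bir,Krein0}): a nonnegative self-adjoint extension $A$ of $\dot A$ with Weyl function $M_A$ is the Friedrichs extension if and only if $\lim_{\lambda\to-\infty}M_A(\lambda)=-\infty$, and is the Krein-von Neumann extension if and only if $\lim_{\lambda\uparrow 0}M_A(\lambda)=+\infty$; consequently $A_F=A_K$ precisely when both limits hold simultaneously for the same extension.

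The third step is to specialize \eqref{repM} to the negative real axis. Substituting $z=-\lambda+i0$ with $\lambda>0$ into $(z/i)^\nu=\exp[\nu(\log z-i\pi/2)]$ yields $(z/i)^\nu=\lambda^\nu e^{i\pi\nu/2}$, and a short manipulation using $i-\cot(\pi\nu/2)=-e^{-i\pi\nu/2}/\sin(\pi\nu/2)$ gives
\[
M_\nu(-\lambda+i0)=\cot\tfrac{\pi\nu}{2}-\frac{\lambda^\nu}{\sin\tfrac{\pi\nu}{2}}\quad(\nu\ne 0),\qquad M_0(-\lambda)=-\tfrac{2}{\pi}\log\lambda,
\]
which is manifestly real, in line with analyticity across $(-\infty,0)$. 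Reading off the limits at $\lambda\to+\infty$ and at $\lambda\to 0^+$: for $\nu\in(0,1)$, $M_\nu(-\infty)=-\infty$ while $M_\nu(0^-)=\cot(\pi\nu/2)$ is finite, so $A(\nu)=A_F$; for $\nu\in(-1,0)$, $M_\nu(-\infty)=\cot(\pi\nu/2)$ is finite while $M_\nu(0^-)=+\infty$, so $A(\nu)=A_K$; and for $\nu=0$, both $M_0(-\infty)=-\infty$ and $M_0(0^-)=+\infty$ hold, forcing $A(0)=A_F=A_K$.

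The main obstacle I anticipate is pinpointing the precise form of the Krein--Birman--Ando criterion above in the normalization used here for $M(z)=((Az+I)(A-zI)^{-1}g_+,g_+)$, and in particular the coincidence clause that yields $A_F=A_K$ at $\nu=0$. A fully self-contained alternative route would be to verify $A(\nu)=A_F$ for $\nu\in[0,1)$ by identifying $\Dom(A(\nu)^{1/2})$ with the form closure of $(\dot A(\nu)f,f)$ on $\Dom(\dot A(\nu))$, and then exploit the unitary equivalence $A(\nu)^{-1}\simeq A(-\nu)$ arising from $\lambda\mapsto 1/\lambda$ together with the Ando--Nishio duality $(A_F(\dot A))^{-1}=A_K(\dot A^{-1})$ to transfer the Friedrichs statement for $\nu>0$ into the Krein statement for $\nu<0$; self-duality at $\nu=0$ then forces $A_F=A_K$ for $\dot A(0)$.
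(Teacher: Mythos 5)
Your argument is correct and essentially coincides with the paper's own proof: both rest on the explicit formula \eqref{repM} for $M_\nu$ restricted to the negative semi-axis together with the threshold-behavior characterization of the Friedrichs and Krein--von Neumann extensions via the Weyl--Titchmarsh function (the paper cites \cite[Theorem 4.4]{GT} for exactly the criterion you state). Your explicit evaluation $M_\nu(-\lambda)=\cot\tfrac{\pi\nu}{2}-\lambda^\nu/\sin\tfrac{\pi\nu}{2}$ and the resulting limits at $-\infty$ and $0^-$ match the limits the paper records, so no gap remains.
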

\begin{proof}

By Lemma \ref{pervaia}, the Weyl-Titchmarsh function associated with the pair  $(\dot A(\nu), A(\nu))$ can be evaluated as
$$M_{(\dot A(\nu), A(\nu))}(z)=M_\nu(z), \quad  z\in \bbC_+.
$$

 To complete the proof it remains to  observe that 
 $$
 \lim_{\lambda\downarrow -\infty}M_\nu(\lambda)=-\infty, \quad 0\le \nu<1,
 $$
 and 
\begin{equation}\label{krkr}
 \lim_{\lambda\uparrow 0}M_\nu(\lambda)=\infty, \quad  -1<\nu\le 0,
 \end{equation}
and then  apply  \cite[Theorem 4.4]{GT},  a result that   characterizes the Weyl-Titchmarsh function threshold behavior
for the Friedrichs and Krein-von Neumann extensions, respectively.
\end{proof}
\begin{remark}\label{unitara}
Notice that  the Cayley transforms of $M_\nu(z)$ and $M_{-\nu}(z)$ coincide up to a constant unimodular factor,
that is,

$$
\frac{M_{\nu}(z)-i}{M_{\nu}(z)+i}=
\frac{\left (i-\cot \frac\pi2 \nu\right )\left (\frac zi\right)^\nu +\cot \frac\pi2 \nu-i}{
\left (i-\cot \frac\pi2 \nu\right )\left (\frac zi\right)^\nu +\cot \frac\pi2 \nu+i}
=\frac{\left (\frac zi\right)^\nu -1}{\left (\frac zi\right)^\nu -e^{i\pi \nu}},
$$
\begin{align*}
\frac{M_{-\nu}(z)-i}{M_{-\nu}(z)+i}&=
\frac{\left (i+\cot \frac\pi2 \nu\right )\left (\frac zi\right)^{-\nu} -\cot \frac\pi2 \nu-i}{
\left (i+\cot \frac\pi2 \nu\right )\left (\frac zi\right)^{-\nu} -\cot \frac\pi2 \nu+i}
=\frac{\left (\frac zi\right)^{-\nu} -1}{\left (\frac zi\right)^{-\nu} -e^{-i\pi \nu}}
\\&=e^{i\pi \nu}\frac{\left (\frac zi\right)^\nu -1}{\left (\frac zi\right)^\nu -e^{i\pi \nu}},\quad z\in \bbC_+.
\end{align*}
Therefore, by \eqref{blog}, the Liv\v{s}ic functions $s_\nu(z)$ and $s_{-\nu}(z)$
 associated with the pairs $(\dot A(\nu), A(\nu))$ and $(\dot A(-\nu), A(-\nu))$, respectfully, are related as
$$
s_\nu(z)=e^{i\pi \nu}s_{-\nu}(z),\quad z\in \bbC_+.
$$
Since the knowledge (up to a unimodular factor)  of the Liv\v{s}ic function $s_{(\dot A, A)}(z)$  of a pair $(\dot A, A)$,
 where $\dot A$ is a prime symmetric operator and $A$ its self-adjoint extension, determines
the symmetric operator $\dot A$ up to a unitary equivalence (see \cite{AkG,L46,MTH,MTBook}), 
and,  moreover,  $\dot A(\nu)$ and $\dot A (-\nu)$ are prime symmetric operators, we conclude that 
 $\dot A(\nu)$ and $\dot A (-\nu)$ are unitarily equivalent.
\end{remark}

The following  theorem addresses   ``intertwining''  properties of 
the Friedrichs and Krein-von Neumann extensions of an operator with respect to  the inverse operation.

\begin{theorem}\label{last} Assume Hypothesis \ref{scal} and set $\dot A=\dot A(\nu)$ and $A= A(\nu)$.
Then
\begin{equation}\label{FKinv}
\left ([\dot A]_F\right )^{-1}=\left [(\dot A)^{-1}\right ]_K
\end{equation}
and
\begin{equation}\label{KFinv}
\left ([\dot A]_K\right )^{-1}=\left [(\dot A)^{-1}\right ]_F.
\end{equation}
\end{theorem}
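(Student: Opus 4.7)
The plan is to apply the invariance principle (Theorem~\ref{takitak}) to the pair $(\dot A, A)$ under the involution $z\mapsto -1/z$ and then use the threshold-behavior characterization of the Friedrichs and Krein-von Neumann extensions that underlies Lemma~\ref{FK}.

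First, by Remark~\ref{corespec}, the point $0$ belongs to the core of the spectrum $\widehat\sigma(\dot A)$ since $\int_0^\infty\lambda^{\nu-2}\,d\lambda=\infty$ for every $\nu\in(-1,1)$ and the representing measure of $A$ carries no point mass at $0$. Hence Theorem~\ref{takitak} applies and gives that $(\dot A)^{-1}$ is a densely defined symmetric operator with deficiency indices $(1,1)$, that $A^{-1}$ is one of its self-adjoint extensions, and that $M_{(-(\dot A)^{-1},-A^{-1})}(z)=M_\nu(-1/z)$ for $z\in\bbC_+$. Combined with the elementary sign-flip identity $M_{(-\dot T,-T)}(z)=-M_{(\dot T,T)}(-z)$ (obtained by the substitution $\lambda\mapsto-\lambda$ in the integral representation~\eqref{murep}), this yields
\[
M_{((\dot A)^{-1},A^{-1})}(z)=-M_\nu(1/z),\quad z\in\bbC_+.
\]

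Using the explicit formula~\eqref{repM}, I would tabulate the real boundary values of $-M_\nu(1/\lambda)$ as $\lambda\downarrow-\infty$ (so $1/\lambda\uparrow 0^-$) and as $\lambda\uparrow 0^-$ (so $1/\lambda\downarrow-\infty$). A short case analysis in $\nu$ shows that the limit at $\lambda\uparrow 0^-$ is $+\infty$ for $\nu\in[0,1)$, while the limit at $\lambda\downarrow-\infty$ is $-\infty$ for $\nu\in(-1,0]$. By the same threshold criterion \cite[Theorem~4.4]{GT} used in the proof of Lemma~\ref{FK}, this identifies $A^{-1}$ as $[(\dot A)^{-1}]_K$ for $\nu\in[0,1)$ and as $[(\dot A)^{-1}]_F$ for $\nu\in(-1,0]$. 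Matching with Lemma~\ref{FK}'s identification of $A$ itself as $[\dot A]_F$ for $\nu\in[0,1)$ or as $[\dot A]_K$ for $\nu\in(-1,0]$ then yields~\eqref{FKinv} when $\nu\in[0,1)$ and~\eqref{KFinv} when $\nu\in(-1,0]$.

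The two complementary identities---\eqref{KFinv} for $\nu\in(0,1)$ and~\eqref{FKinv} for $\nu\in(-1,0)$---follow from the classical Krein reciprocity principle: inversion on the cone of nonnegative self-adjoint extensions of a nonnegative symmetric operator with trivial kernel and dense range is order-reversing in the quadratic-form sense, and therefore interchanges the Friedrichs and Krein-von Neumann extensions. Alternatively, one can avoid this appeal by identifying $[\dot A]_K$ for $\nu\in(0,1)$ via its Liv\v{s}ic parameter, which is read off from the boundary value $\lim_{\lambda\uparrow 0^-}s_\nu(\lambda)=e^{-i\pi\nu}$, and then repeating the above threshold analysis for the pair $(\dot A,[\dot A]_K)$. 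The main obstacle in this plan is the branch-choice bookkeeping for $(z/i)^\nu$ under the substitution $z\mapsto 1/z$ and the corresponding boundary-value computation; once the four boundary limits are correctly tabulated, the rest of the argument reduces to a mechanical matching with Lemma~\ref{FK}'s criterion.
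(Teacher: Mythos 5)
Your first half is correct and takes a mildly different route from the paper. Both arguments hinge on Theorem \ref{takitak}(vi) to transport the Weyl--Titchmarsh function under $z\mapsto -1/z$; but where you compute $M_{((\dot A)^{-1},A^{-1})}(z)=-M_\nu(1/z)$ directly (your sign-flip identity is correct) and then feed its boundary behavior at $0^-$ and at $-\infty$ into the threshold criterion of \cite[Theorem 4.4]{GT}, the paper instead observes from \eqref{repM}--\eqref{repN} that $M_\nu(-1/z)=N_{-\nu}(z)$, concludes the mutual unitary equivalence of pairs $((\dot A(\nu))^{-1},(A(\nu))^{-1})\cong(\dot A(-\nu),A(-\nu))$, and then reads off the extension type entirely from Lemma \ref{FK}, so the threshold criterion is only ever applied to the pairs $(\dot A(\pm\nu),A(\pm\nu))$. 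Your boundary-value tabulation checks out, and it does deliver \eqref{FKinv} for $\nu\in[0,1)$ and \eqref{KFinv} for $\nu\in(-1,0]$.

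The gap is in the complementary cases, \eqref{KFinv} for $\nu\in(0,1)$ and \eqref{FKinv} for $\nu\in(-1,0)$. Your primary suggestion --- the ``classical Krein reciprocity principle'' that inversion interchanges the Friedrichs and Krein--von Neumann extensions --- is precisely the general form of \eqref{FKinv}--\eqref{KFinv}; if you are allowed to cite it, the whole theorem follows in one line with no invariance principle at all, and the paper's closing remark makes clear that this reciprocity (in the form used by Coddington--de Snoo and Ando--Nishio) is what Theorem \ref{last} is meant to \emph{derive} for the model operators rather than assume. Your fallback via the Liv\v{s}ic parameter of $[\dot A]_K$ is in the right spirit but remains a sketch: to run the threshold test you would need the Weyl--Titchmarsh function of the pair $((\dot A)^{-1},([\dot A]_K)^{-1})$, i.e.\ you must redo the Theorem \ref{takitak} computation with the reference extension $[\dot A]_K$ in place of $A(\nu)$, which you do not carry out. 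The paper's device for closing this loop is Remark \ref{unitara}: $\dot A(\nu)$ and $\dot A(-\nu)$ are unitarily equivalent, so the identity already established for the parameter $-\nu$ (where $A(-\nu)=[\dot A(-\nu)]_K$) transports back to $\dot A(\nu)$ and yields \eqref{KFinv}. Incorporating that observation would complete your argument without any appeal to the general reciprocity.
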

\begin{remark}
Notice that the operators   $\dot A (\nu)$,  $A (\nu)$, etc.,
referred to in Hypothesis  \ref{scal}
are essentially coincide with the model multiplication operators in the  weighted Hilbert space $L^2((0,\infty);d\mu)$ given by \eqref{nacha1}-\eqref{nacha3}   (after an appropriate renormalization of the weight $d\mu(\lambda)=\lambda^\nu d\lambda$), so that Theorem \ref{takitak} applies.
For instance, the inverse of $A=\dot A (\nu)$ is well defined as a prime symmetric operator with deficiency indices $(1,1)$.

\end{remark}

\begin{proof} Comparing \eqref{repM} and \eqref{repN} one observes that for $\nu \in (-1,1)$ we have 
\begin{equation}\label{MN}
M_\nu\left (-\frac1z\right )=N_{-\nu}(z), \quad z\in \bbC_+.
\end{equation}
By Theorem \ref{takitak} (vi),  the left hand side of \eqref{MN}  is the Weyl-Titchmarsh function associated with the pair 
$(-(\dot A(\nu))^{-1}, -(A(\nu))^{-1})$ and therefore
$$
(-(\dot A(\nu))^{-1}, -(A(\nu))^{-1})\cong((\dot B(-\nu))^{-1}, (B(-\nu))^{-1}).
$$
Here the symbol $\cong$ denotes the mutual unitary equivalence of the corresponding pairs.
From the definition of the operators $A(\nu)$ and $B(\nu)$ it follows that 
$$
((\dot B(-\nu))^{-1}, (B(-\nu))^{-1})\cong 
(-\dot A(-\nu), -A(-\nu))
$$
and therefore
$$
((\dot A(\nu))^{-1}, (A(\nu))^{-1})\cong(\dot A(-\nu), A(-\nu)).
$$

Suppose that $\nu\ge 0$. By Lemma \ref{FK},
$$
A(\nu)=[\dot A(\nu)]_F.
$$
So that $$
((\dot A(\nu))^{-1}, (A(\nu))^{-1})=((\dot A(\nu))^{-1}, ([\dot A(\nu)]_F)^{-1})
$$
and hence
\begin{equation}\label{muteq}
((\dot A(\nu))^{-1}, ([\dot A(\nu)]_F)^{-1})\cong(\dot A(-\nu), A(-\nu)).
\end{equation}
Again, by Lemma \ref{FK},  the operator $A(-\nu)$ is the Krein-von Neumann extension of $ \dot A(-\nu)$. From the mutual unitary equivalence of the pairs \eqref{muteq} it follows that the second operator from the left pair  is the Krein-von Neumann extension of 
the first one. That is,
$$
([\dot A(\nu)]_F)^{-1}=[(\dot A(\nu))^{-1}]_K
$$
or,  equivalently,
$$
([\dot A]_F)^{-1}=[(\dot A)^{-1}]_K,
$$
which proves 
\eqref{FKinv} (for $\nu\in [0, 1)$).

One also has that
$$
((\dot A(-\nu))^{-1}, (A(-\nu))^{-1})\cong(\dot A(\nu), A(\nu)).
$$
The  same reasoning shows that 
$$
[(\dot A(-\nu))^{-1}]_F=([\dot A(-\nu)]_K)^{-1}.
$$
However, by Remark \ref{unitara},  the operators $\dot A(-\nu)$ and $\dot A(\nu)$ are unitarily equivalent which implies   that
$$
[(\dot A(\nu))^{-1}]_F=([\dot A(\nu)]_K)^{-1}
$$
or, equivalently,
$$
[(\dot A)^{-1}]_F=([\dot A]_K)^{-1},
$$
and \eqref{KFinv} follows (for $\nu\in [0, 1)$).

The proof for $\nu\in (-1,0)$ is analogous.

\end{proof}
\begin{remark}
 In view of Remark \ref{unitara}, from \eqref{muteq} it also follows that the symmetric operators $\dot A$ and $(\dot A)^{-1}$ 
 referred to in Theorem \ref{last} are unitarily equivalent.
\end{remark}
\begin{remark} 
In connection with \eqref{KFinv}
it is worth mentioning  that  in a more general context the  (Krein-)von Neumann extension  $S_N$ of a positive subspace
$S$   has been defined in \cite{Cod} as
$$
S_N=([S^{-1}]_F)^{-1},
$$
where $[S^{-1}]_F$ is the Friedrichs extensions of the relation $S^{-1}$  (see  also \cite{AnNi} and \cite{AT}).

\end{remark}

{\bf Acknowledgement}. We are very grateful to S. Belyi for his help in the submission process  
of  this article.

\end{document}